\newtheorem{thm}{Theorem}[section]
\newtheorem{defn}{Definition}[section]
\newtheorem{prop}{Proposition}[section]
\newtheorem{lem}{Lemma}[section]
\newtheorem{rem}{Remark}[section]
\newtheorem{exmpl}{Example}[section]
\journal{... }
\DeclareMathOperator{\Hom}{Hom}
\newcommand{\lb }{\llbracket}
\newcommand{\rb}{\rrbracket}
\DeclareMathOperator{\g}{\mathfrak{g}}
\DeclareMathOperator{\h}{\mathfrak{h}}
\begin{document}
\begin{frontmatter}

\title{Characterization and Cohomology of  Crossed  Homomorphisms on  Lie Superalgebras}


%
\author{RB Yadav \fnref{myfootnote}\corref{mycorrespondingauthor}}
\cortext[mycorrespondingauthor]{Corresponding author}
\ead{rbyadav01@cus.ac.in, rbyadav15@gmail.com}
\author{Arpan Sharma\corref{mycoauthor}}
\ead{arpansachin@gmail.com}
\address{Sikkim University, Gangtok, Sikkim, 737102, \textsc{India}}

\begin{abstract}
In this article,  we give a characterisation of crossed homomorphisms on Lie superalgebras as a Maurer-Cartan element of a graded Lie algebra. Using this characterisation we study cohomology of these crossed homomorphisms. As an application of this cohomology we study formal deformation of crossed homomorphisms. We show that linear deformations of these homomorphisms are characterised by one cocycles.

\end{abstract}

\begin{keyword}
\texttt{  Cohomology, Crossed homomorphism, Deformation, Maurer-Cartan element, Lie superalgebra}
\MSC[2020]  	16S80, 	16E40,  17B56, 17B70,  17B10, 17B55
\end{keyword}
\end{frontmatter}


\section{Introduction}
Graded Lie algebras have been of significant interest in physics, particularly in the study of supersymmetries, which relate particles with different statistics. In mathematics, they have been explored in the context of deformation theory \cite{MR0438925}. The classification of Lie superalgebras was established by Kac \cite{MR486011}, while Leits \cite{MR0422372} introduced a cohomology theory for them. In physics, Lie superalgebras are also referred to as $ \mathbb{Z}_2$-graded Lie algebras.

Algebraic deformation theory was first introduced by Gerstenhaber for rings and algebras \cite{MR171807},\cite{MR0207793},\cite{MR240167}, \cite{MR389978}, \cite{MR704600}. The deformation theory of Lie superalgebras was later studied by Binegar \cite{MR871615}. The Maurer-Cartan characterization of Lie algebra structures was provided by Nijenhuis and Richardson \cite{MR0214636}, while Gerstenhaber established a similar characterization for associative algebras \cite{MR1028197}. A corresponding characterization for Lie superalgebra structures was given in \cite{MR1028197}. 

Crossed homomorphisms on groups appeared in Whitehead's early work \cite{Whitehead} and were later utilized in the study of non-abelian Galois cohomology \cite{Serre}. The concept was extended to Lie algebras in \cite{Lue} in the context of non-abelian extensions.
Crossed homomorphisms are also called relative difference operators.
More recently, in \cite{Yunhe1},\cite{Apurba},\cite{Wang}, cohomology and deformation of crossed homomorphisms have been studied.

Since cohomology of crossed homomorphisms of Lie superalgebras have not been studied, we find it interesting to study characterisation, cohomology and deformation of these homomorphisms.

Organisation of the paper is as follows: In Section \ref{RAS1}, we recall some basic definitions and results.  In Section \ref{RAS2}, we introduce a $\mathbb{Z}\times \mathbb{Z}_2$-graded Lie algebra whose Maurer-Cartan  elements characterise LieSupAct triple structures. With the help of this Lie algebra and corresponding Maurer-Cartan element $\pi+\rho+\mu$ we introduce a cochain complex whose cohomology we define as cohomology of Lie SupAct triple $(\g,\h,\rho)$. In Section \ref{RAS3}, we study formal one parameter deformation of LieSupAct triples. We show that infinitesimal of a deformation is a $1$-cocycle. We  show that linear deformations are characterised by $1$-cocycles. In Section \ref{RAS4}, we introduce  crossed homomorphisms on Lie superalgebras. We discuss category of all  crossed homomorphisms $D:\g\to\h$, for any fixed LieSupAct triple $(\g,\h,\rho)$.  In Section \ref{RAS5}, we give cohomology and characterisation of  crossed homomorphisms. In Section \ref{RAS6}, we discuss formal one parameter deformation of  crossed homomorphisms on Lie superalgebras. We show that infinitesimal of a formal  deformation  is cocycle. Also, we  show that linear deformations of these operators are characterised by $1$-cocycles.
\section{Preliminaries}\label{RAS1}

  Let $V=V_0\oplus V_1$ and $W=W_0\oplus W_1$ be $\mathbb{Z}_2$-graded vector spaces over a field $K$. A linear map $f:V \to W$ is said to  be homogeneous of degree $\alpha$ if $f(V_\beta)\subset W_{\alpha+\beta}$, for all  $\beta\in \mathbb{Z}_2= \{0,1\}$. We write $(-1)^{\deg(f)}=(-1)^f$. Elements of $ V_\beta$ are called homogeneous of degree $\beta.$  A \textbf{superalgebra} is a $\mathbb{Z}_2$-graded vector space $A=A_0\oplus A_1$ together with a bilinear map $m:A\times A\to A$ such that $m(a,b)\in A_{\alpha+\beta},$ for all  $a\in A_{\alpha}$, $b\in A_{\beta}$. 
\begin{defn}
A  \textbf{ Lie superalgebra } is a a $\mathbb{Z}_2$-graded vector space  $L=L_0\oplus L_1$ over a field $K$ equipped with an operation $[-,-]:L\times L\to L$ satisfying the following conditions:
\begin{enumerate}
\item  $(L, [-,-])$ is  superalgebra,
  \item $[a,b]=-(-1)^{\alpha\beta}[b,a]$, 
  \item  $[a,[b,c]]=[[a,b],c]+(-1)^{\alpha\beta}[b,[a,c]]$, \hspace{3cm}(Jacobi identity)
\end{enumerate}
for all $a\in L_{\alpha}$and $b\in L_{\beta}$.
  Let $L_1$ and $L_2$ be two Lie superalgebras. A homomorphism $f:L_1\to L_2$ is  a $K$-linear map such that $f([a,b])=[f(a),f(b)].$  Given a Lie superalgebra $L$,     the vector subspace of $L$ spanned  by the set $\{[x,y]: x,y\in L\}$ is denoted by $[L,L]$. A Lie superalgebra $L$ is called abelian if $[L,L]=0.$
\end{defn}
\begin{exmpl}\label{rbLSe1}
 Let $V=V_{\bar{0}}\oplus V_{\bar{1}}$ be a $\mathbb{Z}_2$-graded vector  space, $dim V_{\bar{0}} = m$, $dim V_{\bar{1}} = n$. Consider the  associative algebra $End V$ of all endomorphisms of $V$. 
Define
\begin{equation}
\tag{1}
 End_i \; V = \{a \in End \;V\;|\; a V_s\subseteq V_{i+s}\} \; ,\; i,s\in \mathbb{Z}_2
 \end{equation}
 One can easily verify that  $End\; V=End_{\bar{0} }\; V\oplus End_{\bar{1}} \; V  $.
The bracket $[a,b] = ab - (-1)^{\bar{a} \bar{b}}ba$ makes $End V$ into a Lie superalgebra, denoted by $\ell(V)$ or $\ell(m,n)$. In some (homogeneous) basis of $V$, $\ell(m,n)$ consists of  block matrices of the form $\big(\begin{smallmatrix}\alpha & \beta \\ \gamma & \delta \end{smallmatrix}\big)$, where $\alpha, \beta, \gamma,  \delta$ are matrices of order $m\times m$, $m\times n$, $n\times m$ and $n\times n,$ respectively.
\end{exmpl}
Let $V=V_0\oplus V_1$ be a   $\mathbb{Z}_2$-graded vector space.
 Consider the permutation group $S_n.$ For any $X=(X_1,\ldots, X_n)$ with $X_i\in V_{x_i}$ and $\sigma\in S_n$, define 
$$ K(\sigma, X)= card\{ (i,j): i<j,\; X_{\sigma(i)}\in V_1, X_{\sigma(j)}\in V_1,  \; \sigma(j) < \sigma(i)\}, $$
$$\epsilon(\sigma, X)=\epsilon(\sigma)(-1)^{K(\sigma,X)},$$
where $ card\; A $ denotes cardinality of a set $A, $ $\epsilon(\sigma)$ is the signature of $\sigma.$ Also,  define  $\sigma.X=(X_{\sigma^{-1}( 1)},\ldots, X_{\sigma^{-1}( n)}).$
We have following Lemma \cite{MR1028197},
\begin{lem}\label{SFL1}
    \begin{enumerate}
        \item $K(\sigma \sigma',X)=K(\sigma,X)+K(\sigma',\sigma^{-1}X)\;\;\; (mod\; 2).$ 
        \item $\epsilon(\sigma \sigma', X)=\epsilon(\sigma, X)\epsilon(\sigma', 
\sigma^{-1} X)$.
    \end{enumerate}
\end{lem}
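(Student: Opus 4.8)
The plan is to establish (1) first and deduce (2) from it, since the signature $\epsilon$ is a group homomorphism. Granting (1), we have $\epsilon(\sigma\sigma') = \epsilon(\sigma)\epsilon(\sigma')$ and, by definition, $\epsilon(\sigma', \sigma^{-1}X) = \epsilon(\sigma')(-1)^{K(\sigma', \sigma^{-1}X)}$. Multiplying the two sign contributions and substituting (1) into the exponent gives
\[
\epsilon(\sigma\sigma', X) = \epsilon(\sigma)\epsilon(\sigma')(-1)^{K(\sigma, X) + K(\sigma', \sigma^{-1}X)} = \epsilon(\sigma, X)\,\epsilon(\sigma', \sigma^{-1}X),
\]
which is (2). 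Thus all the content is in the parity identity (1).

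For (1), I would first rewrite $K(\sigma, X)$ in a cleaner combinatorial form. Writing $O = \{k : X_k \in V_1\}$ for the set of odd indices and substituting $a = \sigma(i)$, $b = \sigma(j)$ into the defining sum, the constraints $i<j$ and $\sigma(j)<\sigma(i)$ become $\sigma^{-1}(a)<\sigma^{-1}(b)$ and $b<a$, so that
\[
K(\sigma, X) = \#\{(a,b) : a,b \in O,\ a>b,\ \sigma^{-1}(a) < \sigma^{-1}(b)\}.
\]
In words, $K(\sigma,X)$ counts the unordered pairs drawn from $O$ whose natural order is reversed by $\sigma^{-1}$. I also record the companion fact that the odd-index set of $\sigma^{-1}X$ is $\sigma^{-1}(O)$, since $(\sigma^{-1}X)_k = X_{\sigma(k)}$ lies in $V_1$ exactly when $\sigma(k)\in O$.

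The heart of the argument is a pairwise parity identity. Set $\alpha = \sigma^{-1}$ and $\beta = \sigma'^{-1}$, so that $(\sigma\sigma')^{-1} = \beta\alpha$. Fix an unordered pair $\{a,b\}\subseteq O$; the relative order of $a$ and $b$ is reversed by $\beta\alpha$ precisely when it is flipped an odd number of times along the way, i.e. precisely when exactly one of ``$\alpha$ reverses $\{a,b\}$'' and ``$\beta$ reverses the image pair $\{\alpha(a),\alpha(b)\}$'' holds. This is the mod-$2$ statement
\[
[\beta\alpha \text{ reverses } \{a,b\}] \equiv [\alpha \text{ reverses } \{a,b\}] + [\beta \text{ reverses } \{\alpha(a),\alpha(b)\}] \pmod 2 .
\]
Summing over all pairs $\{a,b\}\subseteq O$, the left-hand side yields $K(\sigma\sigma', X)$ by the reformulation above, the first term on the right yields $K(\sigma, X)$, and—because $\{a,b\}\mapsto\{\alpha(a),\alpha(b)\}$ is a bijection from the pairs of $O$ onto the pairs of $\alpha(O)=\sigma^{-1}(O)$, which is exactly the odd-index set of $\sigma^{-1}X$—the second term yields $K(\sigma', \sigma^{-1}X)$. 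This is precisely (1).

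The step I expect to demand the most care is the index bookkeeping: verifying the substitution that converts the defining sum into inversions of $\sigma^{-1}$ restricted to $O$, and confirming that the reindexed term $K(\sigma', \sigma^{-1}X)$ matches the inversion count $\mathrm{inv}_{\alpha(O)}(\beta)$ under the paper's conventions for $\sigma.X$ and for the inverse permutations. Once the three quantities are expressed as sums over the common index set of unordered pairs in $O$, the identity collapses to the elementary fact that order-reversals compose additively modulo $2$.
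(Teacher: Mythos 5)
The paper offers no proof of this lemma --- it is quoted from the cited reference --- so there is no in-text argument to compare against; your proposal supplies a complete and correct proof. The key steps all check out: the substitution $a=\sigma(i)$, $b=\sigma(j)$ does convert the defining set into $\{(a,b): a,b\in O,\ a>b,\ \sigma^{-1}(a)<\sigma^{-1}(b)\}$, i.e.\ the inversions of $\sigma^{-1}$ restricted to the odd-index set $O$; the convention $\sigma.X=(X_{\sigma^{-1}(1)},\ldots,X_{\sigma^{-1}(n)})$ gives $(\sigma^{-1}X)_k=X_{\sigma(k)}$, so the odd-index set of $\sigma^{-1}X$ is indeed $\sigma^{-1}(O)$; with $\alpha=\sigma^{-1}$, $\beta=\sigma'^{-1}$ one has $(\sigma\sigma')^{-1}=\beta\alpha$, and the case analysis on whether $\alpha$ reverses $\{a,b\}$ confirms the pairwise XOR identity, which sums to (1) modulo $2$ via the bijection $\{a,b\}\mapsto\{\alpha(a),\alpha(b)\}$ between pairs of $O$ and pairs of $\sigma^{-1}(O)$. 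Part (2) then follows from multiplicativity of the signature exactly as you say. This is the standard argument one would expect behind the citation, and it is consistent with the lemma's role in the paper, namely making $(\sigma,F)\mapsto\sigma.F$ a genuine action of $S_n$ on $\mathcal{F}_n(V,W)$.
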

Let $V$ and $W$ be  $\mathbb{Z}_2$-graded  vector spaces.  For each  $n\in \mathbb{N},$ define   $\mathcal{F}_{n,\alpha}(V,W)$ as  the vector space of all homogeneous $n$-linear mappings  $f:V \underset{n\; times}{\underbrace{\times\cdots\times}}V\to W$ of degree $\alpha.$ Denote the set of all $n$-linear mappings  $f:V \underset{n\; times}{\underbrace{\times\cdots\times}}V\to W$ by  $\mathcal{F}_{n}(V,W)$.  Clearly, we have   $\mathcal{F}_{n}(V,W)=\mathcal{F}_{n,0}(V,W)\oplus \mathcal{F}_{n, 1}(V,W)$. Write  $\mathcal{F}_{0}(V,W)=W$ and  $\mathcal{F}_{-n}(V,W)=0$,  $\forall n\in \mathbb{N}.$  Take $\mathcal{F}(V,W)=\bigoplus_{n\in \mathbb{Z}}\mathcal{F}_{n}(V,W)$. 

For $F\in \mathcal{F}_n(V,W)$, $X\in V^n,$ $\sigma\in S_n,$ define 
$$(\sigma.F)(X)=\epsilon(\sigma, X)F(\sigma^{-1}X).$$ 

By  using Lemma \ref{SFL1},  one concludes that this defines an action of   $S_n$    on the  $\mathbb{Z}_2$-graded vector space  $\mathcal{F}_n(V,W)$.

Define  $\mathcal{\tilde{F}}_n(V,W)$  for  $n\in \mathbb{Z}$ as follows:\\
 Set  $\mathcal{\tilde{F}}_n(V,W)=\{F\in \mathcal{F}_{n}(V,W) : \sigma.F=F,\;\forall \;\sigma\in S_{n} \}$, for $n\ge 1$ and 
$$\mathcal{\tilde{F}}_n(V,W)=  \begin{cases}
    W& \textit{if}\; n=0\\
    0& \textit{if}\;n\le  -1
\end{cases}.$$
    We know that $\mathcal{\tilde{F}}_n(V,W)\cong  Hom(\wedge^nV, W)$. Write  $\mathcal{E}_n= Hom(\wedge^{n+1}V, V)$ and  $\mathcal{E}=\bigoplus_{n\in \mathbb{Z}}\mathcal{E}_n$. If we denote by $\mathcal{E}_{n,\alpha}$ the homogeneous elements of degree $\alpha\in \mathbb{Z}_2$, then $\mathcal{E}_n=\mathcal{E}_{n,0}\oplus \mathcal{E}_{n,1}$. Define a product $\circ$ on $\mathcal{E}$ as follows:
For $F\in \mathcal{E}_{n,f}$,  $F'\in \mathcal{E}_{n',f'}$  define 
\begin{equation}\label{RA20}
    F\circ F'=\sum_{\sigma\in S_{(n,n'+1)}}\sigma.(F*F'),
\end{equation}
where $$F*F'(X_1, \ldots, X_{n+n'+1})=(-1)^{f'(x_1+\cdots+x_n)}F(X_1,\ldots,X_n, F'(X_{n+1},\ldots,X_{n+n'+1})),$$ for $X_i\in V_{x_i}$, and $S_{(n,n'+1)}$ consists of permutations $\sigma\in S_{n+n'+1}$ such that  $\sigma(1)<\cdots<\sigma(n)$, $\sigma(n+1)<\cdots<\sigma(n+n'+1).$
Clearly, $F\circ F'\in \mathcal{E}_{(n+n',f+f')}$. We have following  Lemma \cite{MR1028197}.
\begin{lem}\label{PLA1}
 For $F\in \mathcal{E}_{n,f}$,  $F'\in \mathcal{E}_{n',f'}$,   $F''\in \mathcal{E}_{n'',f''}$  
 $$(F\circ F')\circ F'' -F\circ (F'\circ F'')= (-1)^{n'n''+f'f''}\{(F\circ F'')\circ F'-F\circ ( F''\circ F')\}.$$
\end{lem}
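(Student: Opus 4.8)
The plan is to prove this graded right-symmetry (pre-Lie) identity by expanding both associators as sums over shuffle permutations and matching them term by term, following the strategy of Nijenhuis--Richardson but with the Koszul signs of the super setting carried throughout. First I would unwind $(F\circ F')\circ F''$ using the definition \eqref{RA20}: this substitutes the inner operation $F''$ into the $(n+n'+1)$-linear map $F\circ F'$ and symmetrises over shuffles. Since each argument slot of $F\circ F'$ is either one of the original $n$ outer slots of $F$ or one of the $n'+1$ slots created by the insertion of $F'$, the full expansion splits into two families: family (I), in which $F''$ lands in an outer slot of $F$, so that $F'$ and $F''$ occupy two distinct arguments of $F$; and family (II), in which $F''$ lands inside the block produced by $F'$, so that $F''$ is nested within $F'$.

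The first key step is to show that family (II) reassembles exactly into $F\circ(F'\circ F'')$. This is the ``nested'' associativity of substitution: inserting $F''$ into $F'$ and then the composite into $F$ yields the same homogeneous multilinear maps, with the same Koszul factors $(-1)^{f''(\cdots)}$ and shuffle signs, as substituting $F'\circ F''$ directly into $F$. I would verify this by using Lemma \ref{SFL1} to factor the composite signature as $\epsilon(\sigma\sigma',X)=\epsilon(\sigma,X)\epsilon(\sigma',\sigma^{-1}X)$ and exhibiting a bijection between the pairs of shuffles arising in the two-step insertion and the shuffles occurring in the one-step insertion $F\circ(F'\circ F'')$. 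Granting this, the associator collapses to the family (I) sum:
\[
(F\circ F')\circ F'' - F\circ(F'\circ F'') = \Phi(F;F',F''),
\]
where $\Phi(F;F',F'')$ denotes the sum of all terms in which $F'$ and $F''$ are plugged into two distinct arguments of $F$.

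It then remains to show that $\Phi$ is graded symmetric under interchanging $(F',n',f')$ with $(F'',n'',f'')$ up to the asserted sign, i.e. $\Phi(F;F',F'')=(-1)^{n'n''+f'f''}\Phi(F;F'',F')$. On both sides $F'$ and $F''$ occupy two disjoint blocks of arguments inserted into $F$, and passing from one side to the other is realised by the bijection of summation shuffles that swaps these two blocks. The resulting constant sign factors as a product: the term $(-1)^{f'f''}$ is the Koszul sign for commuting the internal degree $f''$ of $F''$ past the internal degree $f'$ of $F'$ in the signs $(-1)^{f'(\cdots)}$, $(-1)^{f''(\cdots)}$ of the $\ast$ product, while $(-1)^{n'n''}$ is the signature contribution of transposing the two blocks, computed in the shifted $\mathbb{Z}$-grading under which $F'$ and $F''$ carry degrees $n'$ and $n''$ (the shift by one being built into the definition $\mathcal{E}_n=\Hom(\wedge^{n+1}V,V)$).

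The main obstacle, and the only genuinely delicate part, is this sign accounting. One must track simultaneously the signatures $\epsilon(\sigma,X)$, the argument-dependent Koszul signs coming from the degrees $x_i$, and the internal degrees $f,f',f''$, and confirm that every argument-dependent sign cancels so that the leftover is the constant $(-1)^{n'n''+f'f''}$. I would organise this by fixing, for each configuration of inserted blocks, a standard representative shuffle and reducing all other contributions to it via Lemma \ref{SFL1}; the comparison of $\Phi(F;F',F'')$ with $\Phi(F;F'',F')$ then reduces to a single clean identity of signs rather than a term-by-term chase.
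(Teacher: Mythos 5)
The paper does not actually prove Lemma \ref{PLA1}: it is quoted from the cited reference, so there is no in-house argument to compare yours against. That said, your outline is the standard Nijenhuis--Richardson/Gerstenhaber route for the graded right-symmetric (pre-Lie) identity, and it is the correct one: splitting the expansion of $(F\circ F')\circ F''$ into the nested insertions (which, via the cocycle property of $\epsilon(\sigma,X)$ in Lemma \ref{SFL1}, reassemble into $F\circ(F'\circ F'')$) and the disjoint insertions (which are symmetric in $F'$, $F''$ up to the constant sign) is exactly how the identity is established in the literature, and your identification of where $(-1)^{n'n''}$ (block transposition in the shifted grading) and $(-1)^{f'f''}$ (internal Koszul sign) come from is accurate. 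The only reservation is that the proposal asserts, rather than carries out, the two sign verifications on which everything rests --- that all argument-dependent signs cancel in the bijection between two-step and one-step shuffles for family (II), and again in the block swap for family (I); a complete write-up would need those computations, since a single misplaced $(-1)^{f''\sum x_i}$ would break the identity. As a plan it is sound and contains no wrong step.
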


Using Lemma \ref{PLA1}, we have following theorem \cite{MR195995}.
\begin{thm}\label{MLT3}
$\mathcal{E}$ is a $\mathbb{Z}\times \mathbb{Z}_2$-graded Lie algebra
  with the bracket  $[\;,\;]$ defined  by 
  \begin{equation}\label{GNR}
      [F,F']=F\circ F'-(-1)^{nn'+ff'}F'\circ F
  \end{equation}
  for $F\in \mathcal{E}_{n,f}$,  $F'\in \mathcal{E}_{n',f'}$
\end{thm}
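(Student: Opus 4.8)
The plan is to verify the three defining axioms of a $\mathbb{Z}\times\mathbb{Z}_2$-graded Lie algebra for the bracket \eqref{GNR}: compatibility with the bigrading, graded antisymmetry, and the graded Jacobi identity. Throughout I would write the degree of a homogeneous $F\in\mathcal{E}_{n,f}$ as the pair $(n,f)\in\mathbb{Z}\times\mathbb{Z}_2$ and abbreviate the sign pairing of $F\in\mathcal{E}_{n,f}$ with $F'\in\mathcal{E}_{n',f'}$ by $nn'+ff'$ in the exponent of $-1$, so that $[F,F']=F\circ F'-(-1)^{nn'+ff'}F'\circ F$. The first two axioms are cheap. Since the excerpt already records $F\circ F'\in\mathcal{E}_{n+n',f+f'}$ and symmetrically $F'\circ F\in\mathcal{E}_{n+n',f+f'}$, the bracket lands in $\mathcal{E}_{n+n',f+f'}$, so $[\,,\,]$ respects the $\mathbb{Z}\times\mathbb{Z}_2$-grading. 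Graded antisymmetry is a one-line check: expanding $-(-1)^{nn'+ff'}[F',F]$, using $n'n=nn'$, $f'f=ff'$ and $(-1)^{2(nn'+ff')}=1$, returns exactly $[F,F']$, so $[F,F']=-(-1)^{nn'+ff'}[F',F]$.

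The substance is the graded Jacobi identity, which I would phrase in the cyclic form
\begin{equation*}
(-1)^{nn''+ff''}[[F,F'],F'']+(-1)^{n'n+f'f}[[F',F''],F]+(-1)^{n''n'+f''f'}[[F'',F],F']=0.
\end{equation*}
The conceptual key is that Lemma \ref{PLA1} is precisely the statement that the associator $a(F,F',F''):=(F\circ F')\circ F''-F\circ(F'\circ F'')$ is graded symmetric in its last two arguments, that is, $a(F,F',F'')=(-1)^{n'n''+f'f''}a(F,F'',F')$. In other words $(\mathcal{E},\circ)$ is a graded (right) pre-Lie algebra, and the theorem is the graded instance of the standard fact that the commutator of a pre-Lie product is a Lie bracket.

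Carrying this out, I would expand each double bracket $[[F,F'],F'']$ by applying \eqref{GNR} twice, producing the collection of $\circ$-monomials in $F,F',F''$ together with their sign prefactors built from the pairings $nn'+ff'$, $(n+n')n''+(f+f')f''$, and their cyclic variants. Substituting all three expanded double brackets into the cyclic sum, I would group the resulting terms into associators $a(\cdot,\cdot,\cdot)$. Applying Lemma \ref{PLA1} to each associator replaces it by its swapped partner with the sign $(-1)^{n'n''+f'f''}$, and a comparison of prefactors then shows that every $\circ$-monomial cancels against exactly one other, leaving $0$.

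The hard part is entirely the sign bookkeeping in this final step: one must check that, for each of the six orderings of the triple, the prefactor arising directly and the prefactor produced after invoking Lemma \ref{PLA1} are opposite. This is routine but error-prone, and the cleanest way to control it is to track degrees as single elements of $\mathbb{Z}\times\mathbb{Z}_2$ and verify the exponents modulo $2$, which reduces the whole verification to the known passage from a graded pre-Lie algebra to its associated commutator Lie algebra.
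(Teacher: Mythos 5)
Your proposal is correct and follows exactly the route the paper intends: the paper gives no proof of Theorem \ref{MLT3}, merely asserting that it follows from Lemma \ref{PLA1} (with a citation), and your observation that Lemma \ref{PLA1} says the associator of $\circ$ is graded-symmetric in its last two arguments---so that $(\mathcal{E},\circ)$ is a graded right pre-Lie algebra whose commutator is automatically a graded Lie bracket---is precisely that intended derivation. The remaining sign bookkeeping you defer is routine and is likewise omitted by the paper.
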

For any $\mathbb{Z}_2$-graded vector space $V=V_0\oplus V_1$, let $\mathrm{End}(V)$ denote the vector space of endomorphisms of $V$. For $\alpha\in \mathbb{Z}_2,$ if we define  $\mathrm{End}_\alpha (V)={f\in \mathrm{End}(V): f(V_\beta})\subset V_{\alpha+\beta}$, then we can write   $\mathrm{End}(V) = \mathrm{End}_0 (V) \oplus \mathrm{End}_1( V) $. The bracket $[a,b] = ab - (-1)^{|a||b|} ba$ makes $\mathrm{End} (V)$ into a Lie superalgebra.

\begin{defn}
Let  $s \in \mathbb{Z}_2$ . A derivation of degree $s$ of a superalgebra $A$ is an endomorphism $D \in \mathrm{End}_s(A)$ with the property $$D(ab)=D(a)b + (-1)^{s \cdot |a|}aD(b)$$
where $\mathrm{End}_sA$ is the space of all endomorphisms of degree $s$ on $A$, $|a|$ denotes the degree of $a$ and $\mathrm{Der}_s(A)$ is the space of all derivations of degree $s$. 
\end{defn}
\begin{rem}
    \begin{enumerate}
\item If we write  $\mathrm{Der}(A)=\mathrm{Der}_0(A) \oplus \mathrm{Der}_1(A)$, then space $\mathrm{Der}(A)$ is vector subspace of $ \mathrm{End}(A)$.
\item  $\mathrm{Der}(A)$ is closed under the  bracket $[a,b] = ab - (-1)^{|a||b|} ba$. Hence it is a subalgebra of $\mathrm{End}(A)$ and is called the \textbf{superalgebra of derivations} of $A$. Every element of $\mathrm{Der}(A)$ is called \textbf{derivation} of $A$.
    \end{enumerate}
\end{rem} 

\section{Characterization and Cohomologies of LieSupAct Triples}\label{RAS2}

\begin{defn}
    Let $\mathfrak{g}, [-,-]_{\mathfrak{g}}$ and $\mathfrak{h}, [-,-]_{\mathfrak{h}}$ be two Lie superalgebras. A Lie superalgebra morphism $\rho: \mathfrak{g}\to \mathrm{Der}(\mathfrak{h})$ of degree $0$ is called an action of  $\mathfrak{g}$  on $\mathfrak{h}$. We call the  triple $(\mathfrak{g}, \mathfrak{h}, \rho)$ a \textbf{LieSupAct triple}. 
\end{defn} 
Let  $S_{(l,m,n,p)}$ consist of permutations $\sigma\in S_{l+m+n+p}$ such that  $$\sigma(1)<\cdots<\sigma(l),\;\; \sigma(l+1)<\cdots<\sigma(l+m),$$  $$\sigma(l+m+1)<\cdots<\sigma(l+m+n), \;\;\sigma(l+m+n+1)<\cdots<\sigma(l+m+n+p).$$
Clearly, $S_{(l+m,n+p)}\subset S_{(l,m,n,p)}$.
Let $\mathfrak{g}=\mathfrak{g}_0\oplus \mathfrak{g}_1$ and $\mathfrak{h}=\mathfrak{h}_0\oplus \mathfrak{h}_1$ be two $\mathbb{Z}_2$
graded vector spaces. For any linear maps $\alpha: \wedge ^l \mathfrak{g_0}\otimes \wedge^m\mathfrak{g_1}\otimes \wedge^n\mathfrak{h_0}\otimes \wedge^p\mathfrak{h_1}\to \mathfrak{g} $ and $\beta: \wedge ^l \mathfrak{g_0}\otimes \wedge^m\mathfrak{g_1}\otimes \wedge^n\mathfrak{h_0}\otimes \wedge^p\mathfrak{h_1}\to \mathfrak{h} $, define $\hat{\alpha}, \hat{\beta}\in Hom( \wedge^{l+m+n+p}(\mathfrak{g}\oplus \mathfrak{h}), \mathfrak{g}\oplus \mathfrak{h})$ by
\begin{align}\label{ext1}
   &\hat{\alpha}( x_0^1+x_1^1+y_0^1+y_1^1,\ldots, x_0^{l+m+n+p}+x_1^{l+m+n+p}+y_0^{l+m+n+p}+y_1^{l+m+n+p})\nonumber\\
  =&\sum_{\sigma\in S_{(l,m,n,p)}} \big((\sigma.\alpha) Z, 0\big)  
\end{align}
\begin{align}\label{ext2}
   &\hat{\beta}( x_0^1+x_1^1+y_0^1+y_1^1,\ldots, x_0^{l+m+n+p}+x_1^{l+m+n+p}+y_0^{l+m+n+p}+y_1^{l+m+n+p})\nonumber\\
  =&\sum_{\sigma\in S_{(l,m,n,p)}} \big(0,(\sigma.\beta) Z\big) 
\end{align}
Here 
    $Z=(x_0^1,\ldots, x_0^l,x_1^{l+1}\ldots,x_1^{l+m}, y_0^{l+m+1},\ldots, y_0^{l+m+n},y_1^{l+m+n+1},\ldots, y_1^{l+m+n+p}),$ 
     \begin{scriptsize}
    \begin{align*}
&\sigma^{-1}Z \\
&=(x_0^{\sigma(1)},\ldots, x_0^{\sigma(l)},x_1^{\sigma(l+1)}\ldots,x_1^{\sigma(l+m)}, y_0^{\sigma(l+m+1)},\ldots, y_0^{\sigma(l+m+n)},y_1^{\sigma(l+m+n+1)},\ldots, y_1^{\sigma(l+m+n+p)}).
\end{align*} 
\end{scriptsize}
We write $\g^{l,m,n,p}=\wedge ^l \mathfrak{g_0}\otimes \wedge^m\mathfrak{g_1}\otimes \wedge^n\mathfrak{h_0}\otimes \wedge^p\mathfrak{h_1}$. Then by Kunneth formula  we have $\wedge^{s}(\g \oplus \h)=\oplus_{s=l+m+n+p}\g^{l,m,n,p}.$  From  \ref{ext1} and \ref{ext2}, we get an isomorphism \begin{scriptsize}  $\hat{.}:(\oplus_{l+m+n+p=s} Hom(\g^{l,m,n,p}, \g))\oplus (\oplus_{l+m+n+p=s} Hom(\g^{l,m,n,p}, \h))\to Hom(\wedge^s(\g \oplus \h), \g \oplus \h).$\end{scriptsize}

\begin{prop}\label{subalg1}
    The $\mathbb{Z}\times \mathbb{Z}_2$-graded vector space  $$\mathfrak{F}= \oplus_{s=0}^\infty (Hom(\wedge^{s+1} \g, \g)\oplus(\oplus_{\substack{i+j=s+1\\j\ge 1}} Hom(\wedge^i \g \otimes \wedge^j \h, \h)))$$
    is a graded Lie subalgebra of $\mathbb{Z}\times \mathbb{Z}_2$-graded Lie algebra of $(\mathscr{G},[.;.])$, where \\$\mathscr{G}=\oplus_{s=0}^\infty Hom(\wedge^{s+1}(\g \oplus \h), \g \oplus \h )$ and    $[.;.]$ is the bracket defined in \ref{GNR}.
\end{prop}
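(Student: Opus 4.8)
The plan is to observe first that $\mathfrak{F}$ is visibly a $\mathbb{Z}\times\mathbb{Z}_2$-graded vector subspace of $\mathscr{G}$: under the isomorphism $\hat{\cdot}$ it is exactly the sub-sum of those homogeneous components $\Hom(\wedge^i\g\otimes\wedge^j\h,\g\oplus\h)$ that are either (i) $\g$-valued with no $\h$-argument ($j=0$) or (ii) $\h$-valued with at least one $\h$-argument ($j\ge 1$). Since graded antisymmetry and the graded Jacobi identity are automatically inherited from $\mathscr{G}$ (Theorem \ref{MLT3}), the only thing to prove is that $\mathfrak{F}$ is closed under the bracket $[F,F']$ of \eqref{GNR}; and because this bracket is bilinear and built from the product $\circ$, it suffices to show $G\circ G'\in\mathfrak{F}$ whenever $G,G'\in\mathfrak{F}$.

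I would record an intrinsic description of membership: a multilinear map $G\in\mathscr{G}$ lies in $\mathfrak{F}$ if and only if its $\g$-component $p_\g\circ G$ vanishes as soon as one argument lies in $\h$, while its $\h$-component $p_\h\circ G$ vanishes when all arguments lie in $\g$ (here $p_\g,p_\h$ are the projections of $\g\oplus\h$). As every map is multilinear and $\g\oplus\h$ is a direct sum, it is enough to test these conditions on arguments each lying entirely in $\g$ or entirely in $\h$. Accordingly I would write $G=G_\g+G_\h$ and $G'=G'_\g+G'_\h$, the subscript recording the target, so that $G_\g$ accepts only $\g$-arguments and $G_\h$ requires at least one $\h$-argument, and likewise for $G'$. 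By bilinearity of $\circ$,
\[
G\circ G' = G_\g\circ G'_\g + G_\g\circ G'_\h + G_\h\circ G'_\g + G_\h\circ G'_\h .
\]

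The heart of the argument is then a short case analysis resting on the single structural feature of $\circ$: in every shuffle term of $F\circ F'$ the value $F'(\cdots)$ is fed into one argument slot of the outer map $F$, the remaining slots receiving external arguments; hence $F\circ F'$ is valued wherever $F$ is valued, and a term survives only when the support of each input matches the requirements of $F$ and of $F'$. Thus $G_\g\circ G'_\g$ has only $\g$-arguments and is $\g$-valued, landing in the $\Hom(\wedge^{\bullet}\g,\g)$ part of $\mathfrak{F}$; $G_\g\circ G'_\h$ vanishes identically, since $G'_\h$ returns an element of $\h$ inserted into a slot of $G_\g$, which accepts only $\g$-arguments; $G_\h\circ G'_\g$ is $\h$-valued and, because the slot filled by $G'_\g$ receives a $\g$-element while $G_\h$ still needs an $\h$-argument, must keep at least one external $\h$-argument, so it lies in the $\h$-valued part of $\mathfrak{F}$; and $G_\h\circ G'_\h$ is $\h$-valued and already consumes an external $\h$-argument through $G'_\h$, hence again lies in $\mathfrak{F}$. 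Summing, $G\circ G'\in\mathfrak{F}$, whence $[G,G']\in\mathfrak{F}$.

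The only delicate point to get right is the bookkeeping in the definition of $\circ$: one must verify that after the symmetrization over $S_{(n,n'+1)}$ and application of each $\sigma$ it remains true that the inner map's output occupies a slot of the outer map while the external arguments are merely redistributed among the remaining slots. Since the signs $\epsilon(\sigma,X)$ and the degree sign $(-1)^{f'(x_1+\cdots+x_n)}$ only rescale individual terms and never change which homogeneous subspace an argument or an output lies in, they are irrelevant to the support conditions defining $\mathfrak{F}$; I expect this observation to be exactly what turns the case analysis from plausible into rigorous.
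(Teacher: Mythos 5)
Your proposal is correct and follows essentially the same route as the paper: the paper's proof is exactly a case analysis (its Cases I--III correspond to your four compositions $G_\g\circ G'_\g$, $G_\g\circ G'_\h$, $G_\h\circ G'_\g$, $G_\h\circ G'_\h$, with the two mixed ones merged into one case) resting on the same structural observation that in every shuffle term of $F\circ F'$ the inner map's output occupies one slot of the outer map, so support and target conditions decide which terms survive. The only cosmetic difference is that you organize the analysis around the product $\circ$ and the intrinsic vanishing characterization of membership in $\mathfrak{F}$, whereas the paper works directly with the bracket on homogeneous summands $Hom(\wedge^l\g_0\otimes\wedge^m\g_1\otimes\wedge^r\h_0\otimes\wedge^s\h_1,\cdot)$.
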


\begin{proof}
\textbf{Case-I:}\\
Let $F\in Hom(\wedge^{l}\g_0 \otimes \wedge^m \g_1, \g)$, $G\in Hom(\wedge^{p}\g_0 \otimes \wedge^q \g_1, \g)$ be homogeneous of degree $(a,f)$ and $(b,g)$ respectively. Thus $a=l+m-1$, $b=p+q-1$. Then by definition $$[F,G] =[\hat{F},\hat{G}]=\hat{F}\circ \hat{G}-(-1)^{ab+fg}\hat{G}\circ \hat{F}.$$ By \ref{RA20}, \ref{ext1} and  \ref{ext2}, it is clear that 
$[\hat{F},\hat{G}]Z =0$  if $ \;Z\in \wedge^i \g \otimes \wedge^j_{j\ge 1}\h$ and 
$[\hat{F},\hat{G}]Z \in \g$ if $Z\in \wedge^{a+b+1}\g$.\\
\textbf{Case-II:}\\
Let   $F\in Hom(\wedge^{l}\g_0\otimes \wedge^m \g_1, \g) $,  $G\in Hom(\wedge^{p}\g_0 \otimes \wedge^q \g_1 \otimes \wedge^{r}\h_0 \otimes \wedge^s \h_1, \h)$, where $r+s\ge 1$,  be  homogeneous of degree $(a,f)$ and $(b,g)$, respectively. Thus $b=p+q+r+s-1$, $a=l+m-1$.
Then by definition $$[F,G] =[\hat{F},\hat{G}]=\hat{F}\circ \hat{G}-(-1)^{ab+fg}\hat{G}\circ \hat{F}.$$ By \ref{RA20}, \ref{ext1} and  \ref{ext2}, it is clear that $\hat{F}\circ\hat{G}=0,$  $(\hat{G}\circ\hat{F}) Z=0$ if $Z\in \wedge^{i}\g$  and  $\hat{G}\circ\hat{F}\in \oplus_{\substack{i+j=l+m+p+q+r+s-1\\j\ge 1}} Hom(\wedge^i \g \otimes \wedge^j \h, \h).$  Hence $[F,G]\in \mathfrak{F}$. \\
\textbf{Case-III:}\\
Let $F\in Hom(\wedge^{p'}\g_0\otimes \wedge^{q'} \g_1\otimes \wedge^{r'}\h_0\otimes \wedge^{s'} \h_1, \h)$, $G\in Hom(\wedge^{p}\g_0\otimes \wedge^q \g_1\otimes \wedge^{r}\h_0\otimes \wedge^s \h_1, \h)$, be  homogeneous of degree $(a,f)$ and $(b,g)$, respectively, where $r+s\ge 1$, $r'+s'\ge 1$. Thus $b=p+q+r+s-1$, $a=p'+q'+r'+s'-1$. Then by definition $$[F,G] =[\hat{F},\hat{G}]=\hat{F}\circ \hat{G}-(-1)^{ab+fg}\hat{G}\circ \hat{F}.$$ By \ref{RA20}, \ref{ext1} and  \ref{ext2}, it is clear that $[\hat{F},\hat{G}]Z=0$ if $Z\in \wedge^{i}\g$ and  $$[\hat{F},\hat{G}]\in \oplus_{\substack{i+j=p+q+r+s+p'+q'+r'+s'-1\\j\ge 1}} Hom(\wedge^i \g \otimes \wedge^j \h, \h).$$  Hence $[F,G]\in \mathfrak{F}$. 
This completes the proof.
\end{proof}
\begin{defn}\label{RAdf30} We define  a graded derivation of degree $(\alpha,\beta)$ of $\mathbb{Z}\times \mathbb{Z}_2$-graded Lie algebra $\mathcal{E}$ as is a linear map $\partial :\mathcal{E}\to \mathcal{E}$ of degree $(\alpha,\beta)$ such that $\partial ([x,y])=[\partial x,y]+(-1)^{\alpha l +\beta m}[x,\partial y]$, for every homogeneous  $x$ of degree $(l,m)$.
A  differential $\mathbb{Z}\times \mathbb{Z}_2$-graded Lie algebra   $(\mathcal{E}, [.,.],\partial)$ consists of  a $\mathbb{Z}\times \mathbb{Z}_2$-graded  Lie algebra   $(\mathcal{E}, [.,.])$ and a graded derivation $\partial :\mathcal{E}\to \mathcal{E}$ of degree $ (1,0)$ such that $\partial^2=0$.   Maurer-Cartan elements of $(\mathcal{E}, [.,.],\partial)$ is defined as an element $\omega$ of $\mathcal{E}$  of degree $(1,0)$ such that $\partial\omega+\frac{1}{2}[\omega,\omega]=0.$ Every $\mathbb{Z}\times \mathbb{Z}_2$-graded Lie algebra $(\mathcal{E}, [.,.])$ can be seen as   differential graded Lie algebra with $\partial=0$.
  Hence Maurer-Cartan elements of the $\mathbb{Z}\times \mathbb{Z}_2$-graded  Lie algebra   $\mathcal{E}$ is an element $\omega$ of $\mathcal{E}$  of degree $(1,0)$ such that $[\omega,\omega]=0.$
\end{defn}

\begin{thm}\label{RA1}
Let $\g$ and $\h$ be $\mathbb{Z}_2$-graded vector space. Then Maurer-Cartan elements of the $\mathbb{Z}\times \mathbb{Z}_2$-graded  Lie algebra $(\mathfrak{F}, [.;.])$ defined in \ref{subalg1} characterize  LieSupAct triple structures.
\end{thm}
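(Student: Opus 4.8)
The plan is to show that a degree $(1,0)$ element $\omega$ of $\mathfrak{F}$ is precisely the data of three bilinear maps, and that the single identity $[\omega,\omega]=0$ decouples into the four structure equations defining a LieSupAct triple. First I would pin down the degree $(1,0)$ part of $\mathfrak{F}$: taking $s=1$ in the definition of $\mathfrak{F}$, an element of $\mathbb{Z}$-degree $1$ and $\mathbb{Z}_2$-degree $0$ is a sum $\omega=\pi+\rho+\mu$ with $\pi\in \Hom(\wedge^2\g,\g)$, $\rho\in \Hom(\g\otimes\h,\h)$ and $\mu\in \Hom(\wedge^2\h,\h)$, all of even parity. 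Since $\omega$ has odd $\mathbb{Z}$-degree and even $\mathbb{Z}_2$-degree, the bracket formula \ref{GNR} gives $[\omega,\omega]=2\,\omega\circ\omega$, so the Maurer--Cartan equation reduces to $\omega\circ\omega=0$, an element of degree $(2,0)$, that is, of $\Hom(\wedge^3(\g\oplus\h),\g\oplus\h)$.

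Next I would expand $\omega\circ\omega$ by bilinearity of $\circ$ into the nine terms $\pi\circ\pi,\ \pi\circ\rho,\ \dots,\ \mu\circ\mu$, and use the hat-extensions \ref{ext1}--\ref{ext2} together with \ref{RA20} to record, for each term, its domain and codomain inside $\wedge^3(\g\oplus\h)$. The governing observation is purely type-theoretic: $\hat\pi$ is supported on and valued in $\g$; $\hat\mu$ is supported on and valued in $\h$; $\hat\rho$ is supported on $\g\otimes\h$ and valued in $\h$. Hence a composite $\hat F\circ\hat G$ survives only when the codomain of $\hat G$ matches an admissible argument slot of $\hat F$. This annihilates $\pi\circ\rho$, $\pi\circ\mu$ and $\mu\circ\pi$, and distributes the surviving terms into four distinct Kunneth summands: $\pi\circ\pi\in\Hom(\wedge^3\g,\g)$; $\rho\circ\pi+\rho\circ\rho\in\Hom(\wedge^2\g\otimes\h,\h)$; $\rho\circ\mu+\mu\circ\rho\in\Hom(\g\otimes\wedge^2\h,\h)$; and $\mu\circ\mu\in\Hom(\wedge^3\h,\h)$. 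Because the isomorphism $\hat{\,\cdot\,}$ identifies these summands as independent direct factors, $\omega\circ\omega=0$ holds if and only if each of the four components vanishes separately.

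It then remains to interpret the four component equations. For $\pi\circ\pi=0$ and $\mu\circ\mu=0$ I would invoke the known Maurer--Cartan characterization of Lie superalgebra structures \cite{MR1028197} associated with the bracket \ref{GNR}: for a degree $(1,0)$ element $F$ of $\mathcal{E}$, vanishing of $F\circ F$ (equivalently $[F,F]=0$) is equivalent to the graded Jacobi identity, so these two equations say precisely that $(\g,\pi)$ and $(\h,\mu)$ are Lie superalgebras, graded antisymmetry being built into the wedge domains. Unwinding the shuffle sum in $\rho\circ\pi+\rho\circ\rho$ yields $\rho([x_1,x_2]_\g,y)=\rho(x_1,\rho(x_2,y))-(-1)^{x_1x_2}\rho(x_2,\rho(x_1,y))$, i.e. that $x\mapsto\rho(x,-)$ is a homomorphism $\g\to\mathrm{End}(\h)$ of Lie superalgebras; unwinding $\rho\circ\mu+\mu\circ\rho$ yields $\rho(x,[y_1,y_2]_\h)=[\rho(x,y_1),y_2]_\h+(-1)^{x y_1}[y_1,\rho(x,y_2)]_\h$, i.e. that each $\rho(x,-)$ is a derivation of degree $|x|$ of $\h$. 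Together these last two are exactly the assertion that $\rho:\g\to\mathrm{Der}(\h)$ is a degree $0$ morphism of Lie superalgebras, that is, an action; hence $\omega\circ\omega=0$ is equivalent to $(\g,\h,\rho)$ being a LieSupAct triple, which proves the theorem.

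The main obstacle is the sign bookkeeping in the third step. The slot-matching that decides which of the nine terms vanish is immediate, but verifying that the two mixed components $\rho\circ\pi+\rho\circ\rho$ and $\rho\circ\mu+\mu\circ\rho$ reproduce the morphism and derivation identities with exactly the signs above is delicate: it requires careful tracking of the Koszul signs $\epsilon(\sigma,X)$ coming from the $S_n$-action and of the sign $(-1)^{f'(x_1+\cdots+x_n)}$ in the definition of $F*F'$. This is the $\mathbb{Z}_2$-graded, two-space analogue of the classical Nijenhuis--Richardson computation, and it is where I would expect to spend most of the effort.
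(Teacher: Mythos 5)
Your proposal is correct and follows essentially the same route as the paper: decompose $\omega=\pi+\rho+\mu$, observe that $[\omega,\omega]$ splits across the four independent Kunneth summands $\Hom(\wedge^3\g,\g)$, $\Hom(\wedge^2\g\otimes\h,\h)$, $\Hom(\g\otimes\wedge^2\h,\h)$, $\Hom(\wedge^3\h,\h)$ so that each component must vanish separately, and identify the four resulting equations with the Jacobi identities for $\pi$ and $\mu$, the morphism property of $\rho$, and the derivation property of $\rho(x)$. The only cosmetic difference is that you work with $\omega\circ\omega$ via $[\omega,\omega]=2\,\omega\circ\omega$, while the paper keeps the bracket and simplifies the nine terms to $[\pi,\pi]+2\rho\circ\pi+[\rho,\rho]+2[\rho,\mu]+[\mu,\mu]$; the resulting component equations and their interpretations agree with equations \ref{RA35}--\ref{RA37} of the paper.
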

\begin{proof}
Let $\omega$ be   Maurer- Cartan element of $(\mathfrak{F}, [.;.])$. Hence $w=\pi +\rho+\mu\in  Hom(\wedge^{2} \g, \g) \oplus Hom( \g \wedge \h, \h)\oplus Hom( \wedge^2 \h, \h)))$  such that  $$[\pi +\rho+\mu,\pi +\rho+\mu]=0.$$  Note that $[\pi +\rho+\mu,\pi +\rho+\mu]\in Hom(\wedge^{3} \g, \g)\oplus Hom(\wedge^2 \g \otimes \wedge^1 \h, \h)\oplus Hom(\wedge^1 \g \otimes \wedge^2 \h, \h) \oplus Hom(\wedge^3 \h, \h)$.  Now,
 \begin{align}\label{RA30}
  0 &=  [\pi +\rho+\mu,\pi +\rho+\mu]\nonumber\\
     &= [\pi ,\pi ]+ [\pi ,\rho]+[\pi ,\mu]+[\rho ,\pi ]+ [\rho ,\rho]+[\rho ,\mu]+[\mu ,\pi ]+ [\mu ,\rho]+[\mu ,\mu]\nonumber\\
     &= [\pi ,\pi ]+ 2\rho\circ \pi  + [\rho ,\rho]+ 2[\rho ,\mu]+[\mu ,\mu]
 \end{align} 
 Clearly,  $[\pi ,\pi ]\in Hom(\wedge^{3} \g, \g) $,  $2\rho\circ \pi\  + [\rho ,\rho]\in Hom(\wedge^2 \g \otimes \wedge^1 \h, \h) $,  $2[\rho ,\mu] \in Hom(\wedge^1 \g \otimes \wedge^2 \h, \h)  $  and  $[\mu ,\mu] \in Hom(\wedge^3 \h, \h)$.
 Hence, by using \ref{RA30}, $ [\pi ,\pi ]=0$,  $2\rho\circ \pi  + [\rho ,\rho]=0$, $2[\rho ,\mu]=0$ and  $[\mu ,\mu]=0$.   $ [\pi ,\pi ]=0$ if and only if $(\g,\pi)$ is a Lie superalgebra.  $[\mu ,\mu ]=0$ if and only if $(\h,\mu)$ is a Lie superalgebra. 
 Since $2[\rho ,\mu]=0$,  for $(x_1, x_2, x_3)\in \wedge^1 \g\otimes \wedge^2 \h$, we have 
 \begin{align}\label{RA35}
  0&=[\rho ,\mu](x_1, x_2, x_3)\nonumber\\
  &=(\rho\circ \mu +\mu\circ \rho)(x_1, x_2, x_3)\nonumber\\
  &= \rho (x_1, \mu(x_2, x_3))-(-1)^{x_1 x_2}\rho (x_2, \mu(x_1, x_3))+ (-1)^{x_1x_3+x_2x_3}\rho (x_3, \mu(x_1, x_2))\nonumber\\
  &+ \mu (x_1, \rho(x_2, x_3))-(-1)^{x_1 x_2} \mu (x_2, \rho(x_1, x_3))+(-1)^{x_1x_3+x_2x_3}\mu (x_3, \rho(x_1, x_2))\nonumber\\
   &= \rho (x_1, \mu(x_2, x_3))-(-1)^{x_1 x_2} \mu (x_2, \rho(x_1, x_3))+(-1)^{x_1x_3+x_2x_3}\mu (x_3, \rho(x_1, x_2))
 \end{align}
 \ref{RA35} is equivalent to following 
 \begin{equation}\label{RA36}
      \rho (x_1, \mu(x_2, x_3))=\mu ( \rho(x_1, x_2), x_3)+(-1)^{x_1 x_2} \mu (x_2,\rho(x_1, x_3))
 \end{equation}
 \ref{RA36} defines a linear map $\hat{\rho}:\g \to \mathrm{Der}(\h)$  given by $\hat{\rho}(x_1)(x_2)=\rho(x_1, x_2)$.
  \begin{align}\label{RA37}
  0&=(2\rho\circ \pi+[\rho ,\rho])(x_1, x_2, x_3)\nonumber\\
  &=2(\rho\circ \pi +\rho\circ \rho)(x_1, x_2, x_3)\nonumber\\
  &=2\{ \rho (x_1, \pi(x_2, x_3))-(-1)^{x_1 x_2}\rho (x_2, \pi(x_1, x_3))+ (-1)^{x_1x_3+x_2x_3}\rho (x_3, \pi(x_1, x_2))\nonumber\\
  &+ \rho(x_1, \rho(x_2, x_3))-(-1)^{x_1 x_2} \rho (x_2, \rho(x_1, x_3))+(-1)^{x_1x_3+x_2x_3}\rho (x_3, \rho(x_1, x_2))\}\nonumber \\
   &= \rho (x_1, \rho(x_2, x_3))-(-1)^{x_1 x_2} \rho (x_2, \rho(x_1, x_3))+(-1)^{x_1x_3+x_2x_3}\rho (x_3, \pi(x_1, x_2))\nonumber\\
    &= \rho (x_1, \rho(x_2, x_3))-(-1)^{x_1 x_2} \rho (x_2, \rho(x_1, x_3))-\rho (\pi(x_1, x_2), x_3)
 \end{align}
From \ref{RA37}, we conclude that   $\hat{\rho}:\g \to \mathrm{Der}(\h)$  given by $\hat{\rho}(x_1)(x_2)=\rho(x_1, x_2)$ is  a Lie superalgebra morphism. Given any  LieSupAct triple $(\g,\h,\rho)$, we can repeat above arguements in opposite direction to get a corresponding  Maurer-Cartan elements $\pi +\rho+\mu$.  Hence we conclude that  Maurer-Cartan elements $\pi +\rho+\mu$ characterize LieSupAct triple structiures.
\end{proof}

Let $(\g, \pi)$, $(\h,\mu)$ be Lie superalgebras and $\rho:\g \to \mathrm{Der}(\h)$ be an action of $\g$ on $\h.$ Then by Theorem \ref{RA1}, $\Pi=\pi+\rho+\mu$ is a Maurer-Cartan element of the graded Lie algebra $(\mathfrak{F}, [.;.])$. Define a linear map $\partial_\Pi: \mathfrak{F}\to \mathfrak{F}$ by $\partial_\Pi (a)=[\Pi, a]$.

\begin{thm}\label{RA2}
 Let $(\g, \pi)$, $(\h,\mu)$ be Lie superalgebras and $(\g,\h,\rho)$ be a LieSupAct triple. Then  $(\mathfrak{F}, [.;.], \partial_\Pi)$ is a differential graded Lie algebra.
\end{thm}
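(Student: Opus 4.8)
The plan is to verify the two defining conditions from Definition \ref{RAdf30}: that $\partial_\Pi$ is a graded derivation of degree $(1,0)$, and that $\partial_\Pi^2 = 0$. Both will follow formally from the graded Jacobi identity available in $(\mathfrak{F}, [.;.])$—which holds because $\mathcal{E}$ is a $\mathbb{Z}\times\mathbb{Z}_2$-graded Lie algebra by Theorem \ref{MLT3} and $\mathfrak{F}$ is a subalgebra by Proposition \ref{subalg1}—together with the Maurer-Cartan equation $[\Pi,\Pi]=0$ furnished by Theorem \ref{RA1}.

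First I would record that $\Pi=\pi+\rho+\mu$ is homogeneous of degree $(1,0)$, since $\pi\in Hom(\wedge^2\g,\g)$, $\rho\in Hom(\g\wedge\h,\h)$ and $\mu\in Hom(\wedge^2\h,\h)$ all lie in the $s=1$, $\mathbb{Z}_2$-even component of $\mathfrak{F}$. Consequently $\partial_\Pi(a)=[\Pi,a]$ raises the $\mathbb{Z}$-degree by one and preserves the $\mathbb{Z}_2$-degree, so $\partial_\Pi$ has degree $(1,0)$, as required.

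Next, for the derivation property, I would take $a$ homogeneous of degree $(l,m)$ and $b\in\mathfrak{F}$ arbitrary and apply the graded Jacobi identity to the triple $(\Pi,a,b)$, obtaining
$$[\Pi,[a,b]]=[[\Pi,a],b]+(-1)^{1\cdot l+0\cdot m}[a,[\Pi,b]].$$
This is precisely $\partial_\Pi([a,b])=[\partial_\Pi a,b]+(-1)^{l}[a,\partial_\Pi b]$, the condition for $\partial_\Pi$ to be a graded derivation of degree $(1,0)$. The point to check is that the sign $(-1)^{\deg(\Pi)\cdot\deg(a)}$ produced by the $\mathbb{Z}\times\mathbb{Z}_2$ bracket matches the exponent $\alpha l+\beta m$ of Definition \ref{RAdf30} with $(\alpha,\beta)=(1,0)$; since the $\mathbb{Z}_2$-part of $\deg(\Pi)$ vanishes, both reduce to $(-1)^l$.

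Finally, for $\partial_\Pi^2=0$, I would apply the graded Jacobi identity to $(\Pi,\Pi,a)$. Since $\Pi$ has degree $(1,0)$, the sign $(-1)^{1\cdot1+0\cdot0}=-1$ appears, yielding
$$[\Pi,[\Pi,a]]=[[\Pi,\Pi],a]-[\Pi,[\Pi,a]],$$
so that $2\,\partial_\Pi^2(a)=[[\Pi,\Pi],a]$. The Maurer-Cartan equation $[\Pi,\Pi]=0$ of Theorem \ref{RA1} makes the right-hand side vanish, and as the ground field has characteristic different from $2$ this forces $\partial_\Pi^2(a)=0$ for every $a$. The whole argument is essentially formal; the step I would treat most carefully is the sign bookkeeping in the graded Jacobi identity, ensuring the $\mathbb{Z}$- and $\mathbb{Z}_2$-contributions combine to match Definition \ref{RAdf30}, while the genuine input is the Maurer-Cartan condition, which is where the hypothesis that $(\g,\h,\rho)$ is a LieSupAct triple enters.
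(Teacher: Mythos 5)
Your proposal is correct and follows the same route as the paper: the derivation property comes from the graded Jacobi identity applied with $\Pi$ of degree $(1,0)$, and $\partial_\Pi^2=0$ comes from combining anticommutativity/Jacobi with the Maurer--Cartan equation $[\Pi,\Pi]=0$ from Theorem \ref{RA1}. Your write-up simply makes explicit the sign bookkeeping and the role of the Maurer--Cartan condition that the paper's two-line proof leaves implicit.
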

\begin{proof}
    By using graded Jacobi identity in $(\mathfrak{F}, [.;.])$, we conclude that $\partial_\Pi$ is a graded derivation of degree $1$. Also, the anticommutativity and graded Jacobi identity conditions  imply that $\partial_\Pi^2=0.$
\end{proof}
Next  we shall define cohomology of a LieSupAct triple $(\g,\h,\rho)$ using Theorem \ref{RA2}.  For $n\ge 1$, define $C^n(\g,\h,\rho)=Hom(\wedge^n \g, \g) \oplus \left(\oplus_{i=0}^{n-1}Hom(\wedge^i \g \otimes \wedge^{n-i}\h, \h)\right).$
Define $\partial^n:C^n(\g,\h,\rho)\to C^{n+1}(\g,\h,\rho)$ by $\partial^n(a)=\partial_\Pi(a)=[\Pi,a].$ Then  $\oplus_n \partial^n=\partial_\Pi.$ Write $C^*(\g,\h,\rho)=\oplus_{n=1}^\infty C^n(\g,\h,\rho).$  By Theorem \ref{RA2},
 $(C^*(\g,\h,\rho), \partial_\Pi)$ is a cochain complex. 
We define the cohomology of the cochain complex  $(C^*(\g,\h,\rho), \partial_\Pi)$ to be the \textbf{cohomology of the LieSupAct triple} $(\g,\h,\rho)$. We denote the $n$th cohomology group of $(C^*(\g,\h,\rho), \partial_\Pi)$ by $H^n(\g,\h,\rho)$.

\section{Formal One Parameter Deformation of LieSupAct Triples}\label{RAS3}

 Let $(\g,\pi)$ and $(\h,\mu)$  be  any two Lie superalgebras.  Let $(\g,\h,\rho)$  be a LieSupAct triple.  We write $\g[[t]]=\{\sum_{i=0}^\infty x_it^i: x_i\in \g,\; \forall \; i\}$, $\h[[t]]=\{\sum_{i=0}^\infty y_it^i: x_i\in \h,\; \forall \; i\}$. We write   $\rho_t=\sum_{i=0}^\infty \rho_it^i$,  $\pi_t=\sum_{i=0}^\infty \pi_it^i$, $\mu_t=\sum_{i=0}^\infty \mu_it^i$, where  $\rho_i:\g\to End(\h)$, $\pi_i:\g\wedge \g\to \g$, $\mu_i:\h\wedge \h\to \h $ are linear maps of degree $0$ for every $i\ge 0$ and  $\rho_0=\rho,$  $\pi_0=\pi,$ $\mu_0=\mu.$ 
\begin{defn}\label{RA330}
We define a formal one parameter deformation of  $(\g,\h, \rho)$ to be a triple $(\pi_t,\mu_t, \rho_t)$ such that following conditions are  satisfied:
\begin{enumerate}
    \item \label{RA365} $(\g[[t]], \pi_t)$  and $(\h[[t]], \mu_t)$  are Lie superalgebras.
 \item \label{RA366}  $\rho_t:\g[[t]]\to Der(\h[[t]])$ is a Lie superalgebra homomorphism.  
\end{enumerate}
 If $\pi_1+ \mu_1+\rho_1\ne 0,$ then it is called infinitesimal of the deformation $(\pi_t,\mu_t, \rho_t)$.
\end{defn}
Condition \ref{RA365} and  \ref{RA366}  are  equivalent to following conditions:
    \begin{equation}\label{RA337}
        [\pi_t,\pi_t]=0
    \end{equation}
    \begin{equation}\label{RA338}
        [\mu_t,\mu_t]=0
    \end{equation}
   \begin{equation}\label{RA339}
       \rho_t\pi_t(u,v)x= \rho_t(u)\rho_t(v)x-(-1)^{\alpha\gamma}\rho_t(v)\rho_t(u)x
    \end{equation}
   \begin{equation}\label{RA340} 
   \rho_t(u)\mu_t(x,y)= \mu_t(\rho_t(u)x,y)+(-1)^{\alpha\beta}\mu_t(x,\rho_t(u)y),
\end{equation}
for every   $u\in \g_\alpha,$ $v\in \g_\gamma$, $ x\in \h_{\beta}$. $\forall n\ge 0$,
Equations \ref{RA337}, \ref{RA338}, \ref{RA339} \ref{RA340} are  equivalent to following:
\begin{equation}\label{RA341}
        \sum_{i+j=n}[\pi_i,\pi_j](u,v,w)=0,\forall u,v,w\in \g.
    \end{equation}
    \begin{equation}\label{RA342}
        \sum_{i+j=n}[\mu_i,\mu_j](x,y,z)=0,  \forall x,y,z\in \h.
    \end{equation}
   \begin{equation}\label{RA343}
       \sum_{i+j=n} \rho_i\pi_j(u,v)x=  \sum_{i+j=n}\{\rho_i(u)\rho_j(v)x-(-1)^{\alpha\gamma}\rho_i(v)\rho_j(u)x\}, \forall u\in \g_\alpha, v\in \g_\gamma.
    \end{equation}
\begin{equation}\label{RA344}
  \sum_{i+j=n}\rho_i(u)\mu_j(x,y)= \sum_{i+j=n}\{\mu_i(\rho_j(u)x,y)+(-1)^{\alpha\beta}\mu_i(x,\rho_j(u)y)\}, \forall u\in \g_\alpha, x\in \h_{\beta}
\end{equation}
In particular, for $n=1$ we have 
$[\pi,\pi_1]=0,$ $[\mu,\mu_1]=0,$ $[\pi,\rho_1]+[\rho,\pi_1]+[\rho,\rho_1]=0,$  and $[\rho,\mu_1]+[\mu,\rho_1]=0,$ which is equivalent to $[\pi+\rho+\mu, \pi_1+\rho_1+\mu_1]=0,$ that is $\partial_{\Pi}( \pi_1+\rho_1+\mu_1)=0.$
 Thus  $\pi_1+\rho_1+\mu_1$ is a $1$-cocycle for the cochain complex $(C^*(\g,\h,\rho),\partial_{\Pi})$.  Therefore  we have following lemma:
\begin{lem}
    Infinitesimal of a deformation of $(\g,\h,\rho)$ is a $1$-cocycle.
\end{lem}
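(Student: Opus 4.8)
The plan is to read the defining equations of a deformation as a single Maurer-Cartan equation over $K[[t]]$ and then simply extract the coefficient of $t$. Set $\Pi_t=\pi_t+\rho_t+\mu_t$, a degree $(1,0)$ element of $\mathfrak{F}\otimes K[[t]]$ whose constant term is $\Pi=\pi+\rho+\mu$. The first step is to observe that Conditions \ref{RA365}--\ref{RA366}, equivalently the four families \ref{RA337}--\ref{RA340}, are nothing but the four graded components of the single identity $[\Pi_t,\Pi_t]=0$; this is exactly the computation of Theorem \ref{RA1} carried out verbatim over the coefficient ring $K[[t]]$, using the direct-sum decomposition of $\mathfrak{F}$ from Proposition \ref{subalg1} to separate $[\Pi_t,\Pi_t]$ into its $\Hom(\wedge^3\g,\g)$, $\Hom(\wedge^2\g\otimes\h,\h)$, $\Hom(\g\otimes\wedge^2\h,\h)$ and $\Hom(\wedge^3\h,\h)$ pieces.

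Next I would expand $[\Pi_t,\Pi_t]=\sum_{n\ge 0}\big(\sum_{i+j=n}[\Pi_i,\Pi_j]\big)t^n$ and isolate the coefficient of $t$. Because every $\Pi_i$ is homogeneous of the same degree $(1,0)$, graded antisymmetry of the bracket gives $[\Pi_1,\Pi_0]=[\Pi_0,\Pi_1]$, so the linear coefficient equals $2[\Pi_0,\Pi_1]=2[\Pi,\pi_1+\rho_1+\mu_1]$. Setting it to zero and using that the ground field has characteristic different from $2$ yields $[\Pi,\pi_1+\rho_1+\mu_1]=0$.

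Finally, since by definition $\partial_\Pi(a)=[\Pi,a]$, the last identity is precisely $\partial_\Pi(\pi_1+\rho_1+\mu_1)=0$, i.e. $\pi_1+\rho_1+\mu_1$ is a $1$-cocycle of $(C^*(\g,\h,\rho),\partial_\Pi)$, which is the assertion. As a consistency check one can instead collect the four separate $n=1$ equations $[\pi,\pi_1]=0$, $[\mu,\mu_1]=0$, $[\pi,\rho_1]+[\rho,\pi_1]+[\rho,\rho_1]=0$ and $[\rho,\mu_1]+[\mu,\rho_1]=0$, and verify they reassemble into $[\Pi,\pi_1+\rho_1+\mu_1]$ under the same component decomposition.

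The only real obstacle is bookkeeping rather than ideas: one must make sure the cross terms of $[\Pi,\pi_1+\rho_1+\mu_1]$ fall into the expected summands (for instance that $[\pi,\mu_1]$ and $[\mu,\pi_1]$ vanish, just as $[\pi,\mu]=0$ did in Theorem \ref{RA1}), and that the signs in the collapse $[\Pi_1,\Pi_0]=[\Pi_0,\Pi_1]$ are handled correctly. All of this is a direct reuse of the degree analysis already established for the Maurer-Cartan characterization, so no new computation beyond tracking gradings is needed.
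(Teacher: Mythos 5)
Your proposal is correct and follows essentially the same route as the paper: the paper likewise reduces the deformation conditions to the $t$-graded equations \ref{RA341}--\ref{RA344}, takes the $n=1$ case to get $[\pi,\pi_1]=0$, $[\mu,\mu_1]=0$, $[\pi,\rho_1]+[\rho,\pi_1]+[\rho,\rho_1]=0$, $[\rho,\mu_1]+[\mu,\rho_1]=0$, and reassembles these into $[\pi+\rho+\mu,\pi_1+\rho_1+\mu_1]=0$, i.e.\ $\partial_\Pi(\pi_1+\rho_1+\mu_1)=0$. Your packaging of the whole system as the single Maurer--Cartan equation $[\Pi_t,\Pi_t]=0$ over $K[[t]]$ and extraction of the coefficient of $t$ (using $[\Pi_1,\Pi_0]=[\Pi_0,\Pi_1]$ for degree-$(1,0)$ elements and $\operatorname{char}K\neq 2$) is just a cleaner statement of the same computation.
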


\begin{defn}\label{RA360}
 We write   $\rho_t=\rho+\rho_1t$,  $\pi_t=\pi+ \pi_1t$, $\mu_t=\mu+ \mu_1t$, where  $\rho_1:\g\to \hom(\h,\h)$, $\pi_1:\g\wedge \g\to \g$, $\mu_1:\h\wedge \h\to \h $ are linear maps of degree $0$ for every $i\ge 0$ and  $\rho_0=\rho,$  $\pi_0=\pi,$ $\mu_0=\mu.$ 
We define a linear (or degree one or infinitesimal) one parameter deformation of  $(\g,\h, \rho)$ to be a triple $(\pi_t,\mu_t, \rho_t)$ such that following conditions are  satisfied:
\begin{enumerate}
    \item \label{RA335} $(\g[[t]]/(t^2), \pi_t)$  and $(\h[[t]]/(t^2), \mu_t)$  are Lie superalgebras.
 \item \label{RA336}  $\rho_t:\g[[t]]/(t^2)\to Der(\h[[t]]/(t^2))$ is a Lie superalgebra homomorphism.  
\end{enumerate}
\end{defn}
From Equations \ref{RA341}, \ref{RA342}, \ref{RA343} and \ref{RA344}, we have following theorem:
\begin{thm}
   $(\pi_t,\mu_t, \rho_t)$ is a linear one parameter deformation of  $(\g,\h, \rho)$ if and only if $\pi_1+\rho_1+\mu_1$ is a $1$-cocycle for the cochain complex $(C^*(\g,\h,\rho),\partial_{\Pi})$
\end{thm}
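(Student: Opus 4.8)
The plan is to substitute the truncated series $\pi_t=\pi+\pi_1 t$, $\mu_t=\mu+\mu_1 t$, $\rho_t=\rho+\rho_1 t$ into the structural identities governing a deformation and to read off the coefficients of the powers of $t$ in the quotient ring $K[[t]]/(t^2)$. First I would observe that conditions \ref{RA335} and \ref{RA336} of Definition \ref{RA360} are, by exactly the reasoning that produced \ref{RA337}--\ref{RA340}, equivalent to those same four identities $[\pi_t,\pi_t]=0$, $[\mu_t,\mu_t]=0$, together with \ref{RA339} and the Leibniz relation \ref{RA340}, but now interpreted over the truncated ring $K[[t]]/(t^2)$ rather than over $K[[t]]$.

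Second, because $t^2=0$ in this ring, each of these identities retains only its $t^0$ and $t^1$ components, all higher components vanishing automatically; these are precisely the $n=0$ and $n=1$ instances of \ref{RA341}--\ref{RA344}. The $n=0$ component reads $[\pi,\pi]=0$, $[\mu,\mu]=0$, $2\rho\circ\pi+[\rho,\rho]=0$, $2[\rho,\mu]=0$, which together amount to the Maurer--Cartan equation $[\Pi,\Pi]=0$ for $\Pi=\pi+\rho+\mu$. Since $(\g,\h,\rho)$ is assumed to be a LieSupAct triple, Theorem \ref{RA1} guarantees that this component holds identically, so the deformation conditions reduce to their $t^1$ components alone.

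Third, I would invoke the computation carried out in the paragraph preceding the infinitesimal lemma: the $t^1$ component consists of $[\pi,\pi_1]=0$, $[\mu,\mu_1]=0$, $[\pi,\rho_1]+[\rho,\pi_1]+[\rho,\rho_1]=0$, and $[\rho,\mu_1]+[\mu,\rho_1]=0$, and these four equations are collectively equivalent to the single relation $[\pi+\rho+\mu,\,\pi_1+\rho_1+\mu_1]=0$, that is, $\partial_{\Pi}(\pi_1+\rho_1+\mu_1)=0$. As $\pi_1+\rho_1+\mu_1$ lies in $C^*(\g,\h,\rho)$ by construction, this is exactly the assertion that it is a $1$-cocycle. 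Reading the chain of equivalences in both directions then yields the stated \emph{if and only if}.

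The step requiring the most care is the bidegree bookkeeping in the second and third paragraphs: one must check that, upon expanding the brackets via the graded Lie structure on $\mathfrak{F}$, the cross terms regroup precisely into the four homogeneous pieces of $C^*(\g,\h,\rho)$, and that $\partial_{\Pi}$ applied to the degree-$(1,0)$ element $\pi_1+\rho_1+\mu_1$ reproduces exactly the sum of the four $t^1$ expressions above. This is, however, the very decomposition of $[\Pi,\Pi]$ into its four homogeneous components already established in the proof of Theorem \ref{RA1}, so no genuinely new difficulty arises; the entire content of the theorem is the observation that truncation at $t^2$ discards every $n\ge 2$ obstruction and leaves the cocycle condition as the sole surviving constraint.
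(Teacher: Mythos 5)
Your proposal is correct and follows essentially the same route as the paper: the paper simply cites Equations \ref{RA341}--\ref{RA344}, relying on the fact that truncation at $t^2$ leaves only the $n=0$ component (which holds automatically for the given LieSupAct triple) and the $n=1$ component, which was already shown to be equivalent to $\partial_{\Pi}(\pi_1+\rho_1+\mu_1)=0$ in the paragraph preceding the infinitesimal lemma. Your write-up makes these steps explicit but introduces no new ideas beyond what the paper intends.
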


\section{Crossed homomorphisms on Lie Superalgebras}\label{RAS4}

\begin{defn}
    Let $(\g,\h,\rho)$ be a LieSupAct triple. We define a linear map $D:\g\to \h$ of degree $0$ to be  a \textbf{crossed homomorphism}  with respect to the action $\rho$ if $$D([x,y])=\rho(x)D(y)- (-1)^{\alpha\beta}\rho(y)D(x)+[D(x), D(y)],\;\;\forall x\in \g_\alpha, y\in \g_{\beta}.$$
    
  Crossed homomorphisms are also called as relative difference operators in literature. We call a crossed homomorphism from $\g$ to $\g$ with respect to the adjoint action   a \textbf{difference operator} on $\g$. 
   We note that if $\rho=0,$ then $D$ is simply a Lie superalgebra homomorphism.
 \end{defn}
Next we recall    semi-direct product $\g \ltimes_\rho \h$  which is a Lie superalgebra consisting  of $\mathbb{Z}_2$-graded vector space $\g \oplus \h = (\g_0 \oplus \h_0)\oplus (\g_1 \oplus \h_1)$  and a  bracket \cite{GARCIAMARTINEZ2015464}  given  by $$[x+u,y+v]= [x,y]+\rho(x)v-(-1)^{\alpha\beta}\rho(y)u+[u,v],$$   for all homogeneous  $y\in \g_\beta, u\in \h_\alpha,$ and  $ x\in \g, v\in \h.$

Next theorem gives a characterization of crossed homomorphism in terms of graph of $D.$
\begin{thm}
    Let $(\g,\h,\rho)$ be a LieSupAct triple. Then a linear map $D:\g\to \h$ is a crossed homomorphism if and only if the graph $G(D)=\{(x,D(x)): x\in \g\}$ of $D$ is a Lie subalgebra of the semi-direct product Lie superalgebra $\g \ltimes_\rho \h.$ 
\end{thm}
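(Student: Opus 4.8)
The plan is to exploit that $G(D)$ sits inside $\g \ltimes_\rho \h$ as the set of elements $x + D(x)$, and to read the subalgebra condition directly off the semi-direct product bracket. First I would record two structural facts that hold for free. Because $D$ is linear, $G(D)$ is a linear subspace of $\g \oplus \h$; because $D$ has degree $0$, each $x \in \g_\alpha$ gives $D(x) \in \h_\alpha$, so $G(D) = G(D)_0 \oplus G(D)_1$ with $G(D)_\alpha = \{x + D(x) : x \in \g_\alpha\}$, i.e. $G(D)$ is automatically a $\mathbb{Z}_2$-graded subspace. Thus being a Lie subalgebra reduces to closure under the bracket.

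Next I would compute the bracket of two homogeneous graph elements. Taking $x \in \g_\alpha$, $y \in \g_\beta$ and substituting $u = D(x) \in \h_\alpha$, $v = D(y) \in \h_\beta$ into the semi-direct product formula gives
\[
[x + D(x),\, y + D(y)] = [x,y] + \bigl(\rho(x)D(y) - (-1)^{\alpha\beta}\rho(y)D(x) + [D(x), D(y)]\bigr).
\]
This element has first component $[x,y]$, so it lies in $G(D)$ if and only if its second component equals $D([x,y])$; here I use that $G(D)$ is precisely the graph of a function, so membership is detected by comparing the $\h$-component against $D$ applied to the $\g$-component.

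For the forward direction, if $D$ is a crossed homomorphism then the defining identity says exactly that the parenthesised second component above equals $D([x,y])$, so the bracket equals $[x,y] + D([x,y]) \in G(D)$, and closure follows by bilinearity from the homogeneous case. For the converse, if $G(D)$ is a subalgebra then the displayed bracket lies in $G(D)$; since its first component is $[x,y]$ and $G(D)$ is a graph, its second component must be $D([x,y])$, which is precisely the defining identity of a crossed homomorphism evaluated on $x$ and $y$.

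I expect no serious obstacle, since the argument is essentially an unwinding of definitions. The only points requiring care are the sign bookkeeping, namely checking that the $(-1)^{\alpha\beta}$ in the semi-direct bracket matches the one in the crossed homomorphism condition under the substitution $u = D(x)$, $v = D(y)$, and the observation that it is exactly the ``graph of a function'' property that lets the converse recover the pointwise identity from mere closure.
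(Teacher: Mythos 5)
Your proposal is correct and follows essentially the same route as the paper's proof: substitute $u=D(x)$, $v=D(y)$ into the semi-direct product bracket and observe that closure of the graph is equivalent to the second component equalling $D([x,y])$, which is the crossed homomorphism identity. Your additional remarks on the graded-subspace structure of $G(D)$ and the ``graph of a function'' point only make explicit what the paper leaves implicit.
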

\begin{proof}
By definition  $[x+D(x),y+D(y)]=[x,y]+\rho(x) D(y)-(-1)^{\alpha\beta}\rho(y)D(x)+[D(x),D(y)],$ for every homogeneous $x, y\in \g$  of degree $\alpha$ and $\beta,$ respectively.  Clearly, $D$ is a crossed homomorphism if and only if 
\begin{equation}\label{RA40}
   [x+D(x),y+D(y)]=[x,y]+D([x,y]),\;\; \forall x,y\in \g. 
\end{equation}
\ref{RA40} holds if and only if $G(D)$ is a Lie subalgebra of the semi-direct product Lie superalgebra $\g \ltimes_\rho \h.$
 Hence we conclude the result.
\end{proof}

\subsection{Category of crossed homomorphisms}
\begin{defn}\label{RAD01}  Let $(\g , \h,\rho)$ be a LieSupAct triple.
   Let $D$ and $D'$ be two crossed homomorphisms from $\g$ to $\h$. A morphism from $D$ to $D'$ consists of  Lie superalgebra homomorphisms $\phi_1:\g \to \g$ and $\phi_2:\h \to \h$ of degree $0$ such that 
    \begin{enumerate}
        \item $D'\circ\phi_1= \phi_2\circ D$
        \item $\phi_2\rho(x)(u)=\rho(\phi_1(x))(\phi_2(u)),$ $\forall x\in \g, u\in \h.$
\end{enumerate}
If $\phi_1$ and $\phi_2$ are bijective, then we say that  $D$ and $D'$ are isomorphic.
\end{defn} 
We denote the set of morphisms from $D $ to $D'$ by  $Hom(D,D').$
If $(\phi_1,\phi_2)$ is morphism from $D$ to $D'$ and $(\psi_1, \psi_2)$ is a morphism from $D'$ to $D''$, then  one can easily verify that $(\psi_1\circ \phi_1, \psi_2\circ \phi_2)$ is a morphism from $D$ to $D''.$ For $(\phi_1,\phi_2)\in Hom(D,D')$  and $(\psi_1,\psi_2)\in Hom(D',D'')$, we define
\begin{equation}
    (\psi_1,\psi_2)\circ (\phi_1,\phi_2)= (\psi_1\circ \phi_1,\psi_2\circ \phi_2).
\end{equation}

For every crossed homomorphisms $D$ from $\g$ to $\h$, $(I_{\g},I_{\h})$ is a morphism from $D$ to itself. For any morphism $(\phi_1,\phi_2)$ from $D$ to $D'$ we have $(I_{\g},I_{\h}) \circ (\phi_1,\phi_2)= (\phi_1,\phi_2)\circ (I_{\g},I_{\h})=(\phi_1,\phi_2)$.  Hence we have following theorem: 
\begin{thm}
   The collection  of all crossed homomorphisms from $\g$ to $\h$  forms a category with Hom set given by Definition \ref{RAD01}. 
\end{thm}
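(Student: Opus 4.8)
The plan is to verify directly that the data already assembled---crossed homomorphisms as objects, the morphism sets $\Hom(D,D')$ of Definition \ref{RAD01}, the componentwise composition $(\psi_1,\psi_2)\circ(\phi_1,\phi_2)=(\psi_1\circ\phi_1,\psi_2\circ\phi_2)$, and the pairs $(I_{\g},I_{\h})$ as candidate identities---satisfy the three axioms of a category: closure of composition, associativity, and the two-sided unit laws.

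First I would establish that composition is well defined, i.e. that for $(\phi_1,\phi_2)\in \Hom(D,D')$ and $(\psi_1,\psi_2)\in \Hom(D',D'')$ the pair $(\psi_1\circ\phi_1,\psi_2\circ\phi_2)$ again satisfies the two conditions of Definition \ref{RAD01}. Since a composite of Lie superalgebra homomorphisms of degree $0$ is again such a homomorphism, $\psi_1\circ\phi_1$ and $\psi_2\circ\phi_2$ are of the required type. Condition $(1)$ follows by the chain $D''\circ(\psi_1\circ\phi_1)=(D''\circ\psi_1)\circ\phi_1=(\psi_2\circ D')\circ\phi_1=\psi_2\circ(D'\circ\phi_1)=\psi_2\circ(\phi_2\circ D)=(\psi_2\circ\phi_2)\circ D$, using condition $(1)$ first for $(\psi_1,\psi_2)$ and then for $(\phi_1,\phi_2)$. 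Condition $(2)$ follows by applying the intertwining relation for $(\phi_1,\phi_2)$ and then the one for $(\psi_1,\psi_2)$ evaluated at the arguments $\phi_1(x)$ and $\phi_2(u)$, so that $(\psi_2\circ\phi_2)\rho(x)(u)=\psi_2\big(\rho(\phi_1(x))(\phi_2(u))\big)=\rho\big((\psi_1\circ\phi_1)(x)\big)\big((\psi_2\circ\phi_2)(u)\big)$.

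Associativity is then immediate: for three composable morphisms the two sides of the associativity equation agree componentwise because ordinary composition of maps is associative on each of $\g$ and $\h$. The unit laws and the fact that $(I_{\g},I_{\h})$ is itself a morphism from $D$ to $D$ have already been recorded in the preceding discussion, so it only remains to note that these pairs act as two-sided identities for the chosen composition, which is again componentwise triviality.

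I do not anticipate a genuine obstacle: every axiom reduces to elementary manipulations with function composition together with the two defining conditions of Definition \ref{RAD01}. The only point demanding a little care is the verification of condition $(2)$ under composition, where one must substitute $\phi_1(x)$ for $x$ and $\phi_2(u)$ for $u$ in the relation for $(\psi_1,\psi_2)$; getting that substitution right is the crux of the argument, though it remains routine.
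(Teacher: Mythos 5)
Your proposal is correct and follows essentially the same route as the paper, which simply asserts (in the discussion preceding the theorem) that composites of morphisms are morphisms and that $(I_{\g},I_{\h})$ acts as a two-sided identity; you supply the explicit verification of both conditions of Definition \ref{RAD01} under composition, which the paper leaves as "one can easily verify." The chains of equalities you give for conditions $(1)$ and $(2)$ are exactly the checks needed, and associativity and the unit laws reduce to componentwise function composition as you say.
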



\section{Characterisation and Cohomology of crossed homomorphisms}\label{RAS5}
We define a  $\mathbb{N}\times \mathbb{Z}_2$-graded vector space $C^*(\g,\h)$  by  $C^*(\g,\h)=\oplus_{n=1}^\infty Hom(\wedge^n \g,\h).$  We have $Hom(\wedge^n \g,\h)=Hom_0(\wedge^n \g,\h)\oplus Hom_1(\wedge^n \g,\h),$ where $Hom_i(\wedge^n \g,\h)$ is the 
subspace of $Hom(\wedge^n \g,\h)$ consisting of elements of degree $i,$ $i\in \mathbb{Z}_2.$ We define a bilinear map  $\lb.,.\rb: C^*(\g,\h)\times C^*(\g,\h)\to C^*(\g,\h)$ of degree $(0,0)$ by $$ \lb f_1,f_2 \rb=(-1)^{m-1}[[\mu,f_1],f_2],\;\;
\;\;\forall f_1\in Hom(\wedge^m \g,\h), f_2\in Hom(\wedge^n \g,\h).$$ We have
\begin{align*}
    \lb f_1,f_2 \rb (x_1,\ldots, x_{m+n}) 
   =&(-1)^{m-1}[[\mu,f_1],f_2] (x_1, \ldots, x_{m+n}))\\
    =& ([\mu,f_1] \circ f_2)(x_1, \ldots, x_{m+n}))\\
     =& \sum_{\sigma\in S_{m,n}}\epsilon(\sigma, X) (-1)^{\{s\sum_{i=1}^mx_{\sigma(i)}\}}\\
     &[\mu,f_1] (x_{\sigma(1)}, \ldots, x_{\sigma(m))}, f_2(x_{\sigma (m+1)}, \ldots,  x_{\sigma(m+n)})),\\
      =& \sum_{\sigma\in S_{m,n}}\epsilon(\sigma, X) (-1)^{\{rs+m n-n+1+s\sum_{i=1}^m x_{\sigma(i)}+r\sum_{i=m+1}^{m+n}x_{\sigma(i)}\}}\\
      & [ f_1(x_{\sigma(1)}, \ldots, x_{\sigma(m)}), f_2(x_{\sigma (m+1)}, \ldots,  x_{\sigma(m+n)})], 
\end{align*}
$\forall$ $f_1\in Hom_r(\wedge^m \g,\h),$ $f_2\in Hom_s(\wedge^n \g,\h)$, $X=(x_1,\ldots, x_{m+n})\in \wedge^{m+n} \g$.  Also, we define a linear map  $ \partial_{\pi+\rho}:C^*(\g,\h)\to C^*(\g,\h)$  of degree $(1,0)$ by $\partial_{\pi+\rho}f=[\pi+\rho, f],$ for each $f\in Hom(\wedge^n \g,\h). $ Next theorem gives a characterisation of crossed homomorphisms between  Lie superalgebras  as Maurer-Cartan elements of $\mathbb{N}\times \mathbb{Z}_2$-graded  Lie algebra.

\begin{thm}\label{RA222}
    $(C^*(\g, \h), \lb.,.\rb, \partial_{\pi+\rho})$ is a differential $\mathbb{N}\times \mathbb{Z}_2$-graded  Lie algebra.  A linear map $D: \g \to \h$ of degree $0$ is a crossed homomorphism between Lie superalgebras $\g$ and $\h$ with respect to $\rho$ if and only if  $D$  is a Maurer-Cartan  element of $(C^*(\g,\h), \lb.,.\rb, \partial_{\pi+\rho})$.
\end{thm}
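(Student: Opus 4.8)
The plan is to prove the theorem in two logically separate parts. First I establish that $(C^*(\g,\h), \lb.,.\rb, \partial_{\pi+\rho})$ is a differential $\mathbb{N}\times\mathbb{Z}_2$-graded Lie algebra, and second I show that the Maurer-Cartan equation for an element $D$ of degree $(1,0)$ (i.e. $D\in Hom(\wedge^1\g,\h)=Hom(\g,\h)$) is exactly the crossed-homomorphism identity.

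**For the algebraic structure**, the key observation is that the bracket $\lb f_1,f_2\rb=(-1)^{m-1}[[\mu,f_1],f_2]$ is built out of the ambient graded Lie bracket on $\mathcal{E}$ (equivalently on the subalgebra $\mathfrak{F}$ of Proposition \ref{subalg1}), together with the fixed Maurer-Cartan data $\mu,\rho$ coming from the LieSupAct triple. So the derived bracket $\lb.,.\rb$ should inherit graded antisymmetry and the graded Jacobi identity from the ambient structure via the standard derived-bracket formalism. Concretely, I would verify graded antisymmetry and the (graded) Jacobi identity for $\lb.,.\rb$ by expanding both sides in terms of $[.,.]$ and $\circ$ on $\mathcal{E}$ and repeatedly invoking Lemma \ref{PLA1} and Theorem \ref{MLT3}, using that $[\mu,\mu]=0$ (from Theorem \ref{RA1}) to kill the obstruction terms. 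For the differential, I would check $\partial_{\pi+\rho}^2=0$ and that $\partial_{\pi+\rho}$ is a graded derivation of $\lb.,.\rb$; the first follows because $\tfrac12[\pi+\rho,\pi+\rho]$ lies in the kernel of the bracket action on $C^*(\g,\h)$ (using $[\pi,\pi]=0$ and $2\rho\circ\pi+[\rho,\rho]=0$), and the derivation property follows from the graded Jacobi identity in $(\mathfrak{F},[.;.])$ applied to the three elements $\pi+\rho$, $f_1$, $f_2$.

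**For the Maurer-Cartan characterization**, I take $D\in Hom(\g,\h)$ of degree $(1,0)$ and write out the Maurer-Cartan equation $\partial_{\pi+\rho}D+\tfrac12\lb D,D\rb=0$. The term $\partial_{\pi+\rho}D=[\pi+\rho,D]$ should unfold, via the definitions \eqref{RA20} and \eqref{GNR} applied to $\pi\in Hom(\wedge^2\g,\g)$ and $\rho\in Hom(\g\wedge\h,\h)$, into $-D([x,y])+\rho(x)D(y)-(-1)^{\alpha\beta}\rho(y)D(x)$; the $\pi$-part produces the $D\circ\pi$ term and the $\rho$-part produces the two action terms. Meanwhile $\tfrac12\lb D,D\rb=\tfrac12(-1)^{1-1}[[\mu,D],D]=\tfrac12[[\mu,D],D]$ should, from the explicit formula for $\lb.,.\rb$ displayed just before the theorem with $m=n=1$, collapse to the bracket term $[D(x),D(y)]$ (the prefactor signs specialize to $+1$ and the antisymmetrized sum over $S_{1,1}$ recovers the single commutator). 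Adding these and setting the result to zero yields precisely $D([x,y])=\rho(x)D(y)-(-1)^{\alpha\beta}\rho(y)D(x)+[D(x),D(y)]$, which is the defining identity. Reversing the computation gives the converse.

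**The main obstacle** I anticipate is bookkeeping the Koszul signs and the $\epsilon(\sigma,X)$ factors when unfolding $[\pi+\rho,D]$ and $[[\mu,D],D]$ through the product $\circ$ of \eqref{RA20}: the permutation sums over $S_{(n,n'+1)}$ and the degree-dependent signs $(-1)^{f'(x_1+\cdots+x_n)}$ must be tracked carefully so that the cross terms reorganize into exactly the three terms of the crossed-homomorphism relation with the correct sign $(-1)^{\alpha\beta}$. The structural parts (Jacobi, $\partial^2=0$) are essentially formal consequences of the derived-bracket machinery once $[\mu,\mu]=0$ and $2\rho\circ\pi+[\rho,\rho]=0$ are in hand; the genuine computation is the sign-accurate unpacking of the MC equation at bidegree $(1,0)$.
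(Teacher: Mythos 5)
Your proposal is correct and follows essentially the same route as the paper: verify the derived-bracket graded Lie algebra structure on $C^*(\g,\h)$ from the ambient graded Lie algebra together with the Maurer--Cartan identities for $\pi+\rho+\mu$, and then unfold $\partial_{\pi+\rho}D+\tfrac{1}{2}\lb D,D\rb$ at bidegree $(1,0)$ into the crossed-homomorphism identity. The one step to make explicit when writing it up is that the derivation property of $\partial_{\pi+\rho}$ with respect to $\lb .,.\rb$ requires, beyond the Jacobi identity applied to $\pi+\rho$, $f_1$, $f_2$, the vanishing of the cross term $[[[\pi+\rho,\mu],f_1],f_2]$, i.e.\ that $((\rho\circ\mu+\mu\circ\rho)\circ f_1)\circ f_2=0$ on arguments from $\g$, which is exactly the paper's computation (\ref{RA224}).
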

\begin{proof}
We have, for any $X=(X_1,\ldots, X_{m+n})\in \wedge^{m+n} \g$ and for homogeneous $f_1\in Hom(\wedge^m \g,\h)$, $f_2\in Hom(\wedge^n \g,\h)$, $0=[\mu,[f_1,f_2]]X=[[\mu, f_1], f_2]X+(-1)^{m-1}[f_1,[\mu, f_2]]X=[[\mu, f_1], f_2]X+(-1)^{m+mn-n+f_1f_2}[[\mu, f_2], f_1]X$. Hence $ [[\mu, f_1], f_2]+(-1)^{m+mn-n+f_1f_2}[[\mu, f_2], f_1]=0$ on $C^*(\g,\h).$  This implies that $ \lb f_1,f_2 \rb =-(-1)^{mn+f_1f_2}\lb f_2,f_1 \rb.$
\begin{align*}
    \lb f_1,\lb f_2,f_3\rb \rb &=(-1)^{l+m}[[\mu,f_1],[[\mu,f_2],f_3]]\\
    &=(-1)^{l+m}[[[\mu,f_1],[\mu,f_2]],f_3]+(-1)^{l+m+lm+f_1f_2}[[\mu,f_2], [[\mu,f_1],f_3]]\\
    &=(-1)^{m}[ [\mu,[[\mu,f_1],f_2] ],f_3]+(-1)^{l+m+lm+f_1f_2}[[\mu,f_2], [[\mu,f_1],f_3]]\\
    &=\lb \lb f_1,f_2 \rb ,f_3 \rb +(-1)^{lm+f_1f_2} \lb f_2, \lb f_1,f_3 \rb \rb,
\end{align*}
for every homogeneous $f_1\in Hom(\wedge^l \g,\h)$, $f_2\in Hom(\wedge^m \g,\h)$ , $f_3\in Hom(\wedge^n \g,\h)$.
For any $f\in Hom(\wedge^m \g,\h)$  and $X \in \wedge^{m+2}\g,$ we have 
\begin{align}
 0&=[[\pi + \rho + \mu,\pi + \rho + \mu],f]X \nonumber \\
  &=[[\pi + \rho, \pi + \rho]+[\pi + \rho ,\mu]+[\mu,\pi + \rho]+ [\mu,\mu],f]X \nonumber \\
  &=[[\pi + \rho, \pi + \rho]+[\pi + \rho ,\mu]+(-1)^{1+1+0}[\pi + \rho,\mu] + 0 ,f]X \nonumber \\
  &=[[\pi + \rho, \pi + \rho]+2[\pi + \rho ,\mu],f]X \nonumber \\
  &=[[\pi + \rho, \pi + \rho],f]X+ 0 +2[[\rho,\mu],f]X \nonumber \\
  &=[[\pi + \rho, \pi + \rho],f]X + 2\{[\rho,\mu]\circ f -(-1)^{2(m-1)}f \circ[\rho,\mu]\}X \nonumber \\
  &=[[\pi + \rho, \pi + \rho],f]X + 2\{(\rho \circ \mu -(-1)^{1+0}\mu \circ \rho)\circ f -0 \}X \nonumber \\
  &=[[\pi+\rho,\pi + \rho],f]X, \nonumber
\end{align}
Hence   $[\pi+\rho, [\pi+\rho, f]] =[[\pi+\rho, \pi+\rho], f]-[\pi+\rho, [\pi+\rho, f]]=-[\pi+\rho, [\pi+\rho, f]].$ This implies that $[\pi+\rho, [\pi+\rho, f]]=0.$ Thus  $\partial_{\pi+\rho}^2f=[\pi+\rho, [\pi+\rho, f]] =0.$\\
For any $f_1\in Hom_r(\wedge^m \g,\h)$ and $f_2\in Hom_s(\wedge^n \g,\h) $ we have 
\begin{align}\label{RA223}
    &\partial_{\pi+\rho} \lb f_1,f_2 \rb =[\pi+\rho, \lb f_1,f_2 \rb ]\nonumber\\
    &=(-1)^{m-1}[\pi+\rho, [[\mu,f_1],f_2] ]\nonumber\\
    &=(-1)^{m-1}[[\pi+\rho, [\mu,f_1]],f_2] +(-1)^{2m-1}[ [\mu,f_1], [\pi+\rho,f_2]]\nonumber\\
    &= (-1)^{m-1}\{[[[\pi+\rho, \mu],f_1],f_2] -[[\mu, [\pi+\rho,f_1] ],f_2]+(-1)^m[ [\mu,f_1], [\pi+\rho,f_2]]\}.
\end{align}
For any $(X_1,\ldots, X_{m+n+1})\in \wedge^{m+n+1} g$, by direct computation we have 
\begin{align}\label{RA224}
    [[[\pi+\rho, \mu],f_1],f_2](X_1,\ldots, X_{m+n+1})&=((\rho\circ \mu+\mu\circ \rho )\circ f_1)\circ f_2(X_1,\ldots, X_{m+n+1})\nonumber\\
    &=0.
\end{align}
From \ref{RA223} and \ref{RA224}, we conclude that $\partial_{\pi+\rho} \lb f_1,f_2 \rb=\lb \partial_{\pi+\rho}f_1,f_2 \rb +(-1)^m \lb f_1, \partial_{\pi+\rho}f_2 \rb .$
Thus $\partial_{\pi+\rho}$ is a derivation of degree $(1,0).$
Let  $D \in \Hom(\wedge^1 \g,\h)$, $\pi \in \Hom(\wedge^2 \g,\g)$, $\mu \in \Hom(\wedge^2 \h,\h)$, $\rho \in \Hom(\wedge^1 \g \otimes \wedge^1 \h,\h)$. Then 
$\partial_{\pi + \rho}D + \frac{1}{2}\lb D, D \rb = [\pi + \rho, D] +\frac{1}{2} \lb D,D \rb = [\pi, D] + [\rho, D] +\frac{1}{2} \lb D,D \rb $.\\
For any $X=(X_1,X_2) \in \wedge^2 \g $, we have 
$$[\pi,D]X = -(-1)^{\pi D } D (\pi (X_1,X_2)) = -D(\pi(X_1,X_2))$$ 
$$[\rho,D]X =  \rho(X_1,D(X_2)) + (-1)^{X_1X_2  +1} \rho (X_2,D (X_1)), \;\lb D,D \rb X = 2 \mu(D(X_1),D(X_2)).$$
Thus \begin{align*}
    &(d_{\pi + \rho}D + \frac{1}{2}\lb D, D \rb)X \\&= -D(\pi(X_1,X_2)) + \rho(X_1,D(X_2)) - (-1)^{X_1 X_2}\rho(X_2,D(X_1)) + \mu(D(X_1),D(X_2)).
\end{align*} 
Hence, $D$ is a crossed homomorphism between Lie superalgebras $\g$ and $\h$ with respect to $\rho$ if and only if  $D$  is a Maurer-Cartan element of $(C^*(\g,\h), \lb.,.\rb, \partial_{\pi+\rho})$.

\end{proof}
Define $d_D:C^*(\g,\h)\to C^*(\g,\h)$  by $d_Df=\partial_{\pi+\rho} f+\lb D,f \rb ,$  for every $f\in Hom(\wedge^n \g,\h).$  
\begin{thm}
   $(C^*(\g,\h), \lb .,.\rb, d_D)$ is a differential $\mathbb{N}\times \mathbb{Z}_2$-graded  Lie algebra.  
\end{thm}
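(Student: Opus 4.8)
The plan is to recognize this statement as an instance of the standard \emph{twisting by a Maurer--Cartan element} construction and to reduce everything to facts already proved. From Theorem \ref{RA222} we already know that $(C^*(\g,\h), \lb.,.\rb, \partial_{\pi+\rho})$ is a differential $\mathbb{N}\times\mathbb{Z}_2$-graded Lie algebra, and that a crossed homomorphism $D\in \Hom(\wedge^1\g,\h)$, of bidegree $(1,0)$, is precisely a Maurer--Cartan element, i.e.
$$\partial_{\pi+\rho}D+\tfrac{1}{2}\lb D,D\rb=0.$$
Since the bracket $\lb.,.\rb$ is not altered, the only things left to verify are that $d_D=\partial_{\pi+\rho}+\lb D,-\rb$ is a graded derivation of degree $(1,0)$ and that $d_D^2=0$. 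First I would note that $d_D$ indeed raises the $\mathbb{N}$-degree by one and preserves $\mathbb{Z}_2$-degree, because $\partial_{\pi+\rho}$ has bidegree $(1,0)$ and $D$ has bidegree $(1,0)$, so $\lb D,f\rb$ sits in $\Hom(\wedge^{m+1}\g,\h)$ with the same parity as $f$ for $f\in\Hom(\wedge^m\g,\h)$.

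For the derivation property I would write $d_D=\partial_{\pi+\rho}+\lb D,-\rb$ and treat the two summands separately. The map $\partial_{\pi+\rho}$ is already a derivation of degree $(1,0)$ by Theorem \ref{RA222}. For the inner operator $\lb D,-\rb$, I would specialize the graded Jacobi identity established in the proof of Theorem \ref{RA222}, namely $\lb f_1,\lb f_2,f_3\rb\rb=\lb\lb f_1,f_2\rb,f_3\rb+(-1)^{lm+f_1f_2}\lb f_2,\lb f_1,f_3\rb\rb$, to the case $f_1=D$ (with $l=1$ and even parity). This gives $\lb D,\lb g_1,g_2\rb\rb=\lb\lb D,g_1\rb,g_2\rb+(-1)^{m}\lb g_1,\lb D,g_2\rb\rb$ for $g_1\in\Hom(\wedge^m\g,\h)$, which is exactly the statement that $\lb D,-\rb$ is a derivation of bidegree $(1,0)$. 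A sum of two derivations of the same degree is again a derivation, so $d_D$ is a derivation of degree $(1,0)$.

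The heart of the argument is $d_D^2=0$, where the Maurer--Cartan equation is used. I would expand
$$d_D^2 f=\partial_{\pi+\rho}^2 f+\partial_{\pi+\rho}\lb D,f\rb+\lb D,\partial_{\pi+\rho}f\rb+\lb D,\lb D,f\rb\rb.$$
The first term vanishes since $\partial_{\pi+\rho}^2=0$. Applying the derivation rule of Definition \ref{RAdf30} to $\partial_{\pi+\rho}$ (degree $(1,0)$) acting on $\lb D,f\rb$ with $D$ of degree $(1,0)$ produces $\partial_{\pi+\rho}\lb D,f\rb=\lb\partial_{\pi+\rho}D,f\rb-\lb D,\partial_{\pi+\rho}f\rb$, so the two middle terms collapse to $\lb\partial_{\pi+\rho}D,f\rb$. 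The graded Jacobi identity applied to $\lb D,\lb D,f\rb\rb$ (with the sign $(-1)^{1\cdot 1}=-1$ coming from $D$ being of bidegree $(1,0)$) yields $\lb D,\lb D,f\rb\rb=\tfrac{1}{2}\lb\lb D,D\rb,f\rb$. Combining these gives $d_D^2 f=\lb\partial_{\pi+\rho}D+\tfrac{1}{2}\lb D,D\rb,\,f\rb$, which vanishes by the Maurer--Cartan equation for $D$. I expect the only real obstacle to be the consistent bookkeeping of the $\mathbb{N}\times\mathbb{Z}_2$ signs (the interplay of the $\mathbb{N}$-part and the $\mathbb{Z}_2$-parity in each application of the derivation and Jacobi rules); once the sign conventions from Definition \ref{RAdf30} and Theorem \ref{RA222} are tracked carefully, the computation is purely formal and the result follows.
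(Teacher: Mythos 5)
Your proposal is correct and follows essentially the same route as the paper: appeal to Theorem \ref{RA222} for the graded Lie algebra structure and the derivation property, observe that $d_D$ is a sum of two derivations of degree $(1,0)$, and then verify $d_D^2=0$ by expanding, using $\partial_{\pi+\rho}\lb D,f\rb=\lb\partial_{\pi+\rho}D,f\rb-\lb D,\partial_{\pi+\rho}f\rb$ and $\lb D,\lb D,f\rb\rb=\tfrac{1}{2}\lb\lb D,D\rb,f\rb$ to reduce everything to the Maurer--Cartan equation for $D$. Your write-up is in fact slightly more explicit than the paper's (e.g.\ in justifying via the graded Jacobi identity that $\lb D,-\rb$ is itself a derivation), but the argument is the same.
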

\begin{proof}
From Theorem \ref{RA222}, $(C^*(\g, \h), \lb.,.\rb)$ is a $\mathbb{N}\times \mathbb{Z}_2$-graded Lie algebra. We just need to show that $d_D$ is a graded derivation and differential of degree $(1,0)$. Since sum of two derivations of degree  $(1,0)$   is a derivation of degree  $(1,0)$, by using Theorem \ref{RA222}, $d_D$ is a derivation of degree $(1,0)$. For $f\in Hom(\wedge^n \g,\h),$  we have $d_D^2f=d_D(\partial_{\pi+\rho} f+ \lb D,f \rb )= \lb D,\partial_{\pi+\rho} f \rb + \lb D, \lb D,f \rb \rb +\partial_{\pi+\rho} \lb D ,f\rb = \lb D,\partial_{\pi+\rho} f \rb +\frac{1}{2} \lb \lb D,D \rb ,f \rb + \lb \partial_{\pi+\rho} D,f \rb -  \lb D,\partial_{\pi+\rho} f \rb = \lb \partial_{\pi+\rho}D+\frac{1}{2} \lb D,D \rb ,f \rb =0.$
\end{proof}
Define $C^n(D)=Hom(\wedge^n\g,\h)$, $\forall n \in \mathbb{N}$.  Write $C^*(D)=\oplus_{n\in \mathbb{N}}C^n(D)$. Then $(C^*(D), d_D)$ is a cochain complex. We call the cohomology of $(C^*(D), d_D)$  as the cohomology of crossed homomorphism $D:\g\to \h$ and denote it by $H^*(D).$  Thus $H^*(D)=\oplus_{n\in \mathbb{N}} H^n(D)$, where  $H^n(D)$ is the $n$-th cohomology of the cochain complex $(C^*(D), d_D)$.

\section{Formal One Parameter Deformation of crossed homomorphisms}\label{RAS6}

    Let $(\g,\pi)$ and $(\h,\mu)$  be  any two Lie superalgebras.  Let $(\g,\h,\rho)$  be a LieSupAct triple and $D:\g\to\h$ be a crossed homomorphism. We write $\g[[t]]=\{\sum_{i=0}^\infty x_it^i: x_i\in \g,\; \forall \; i\}$, $\h[[t]]=\{\sum_{i=0}^\infty y_it^i: x_i\in \h,\; \forall \; i\}$. We write   $D_t=\sum_{i=0}^\infty D_it^i$, where  $D_i:\g\to \h$ are linear maps of degree $0$ for every $i\ge 0$ and  $D_0=D.$  
    \begin{rem}
    \begin{enumerate}
        \item $\pi$ extends to $\tilde{\pi}:\g[[t]]\otimes \g[[t]]\to \g[[t]]$ given by $$\tilde{\pi}(\sum_{i=0}^\infty x_it^i,\sum_{j=1}^\infty y_jt^j)=\sum_{l=1}^\infty\sum_{i+j=l}\pi(x_i,y_j)t^l$$ and $(\g[[t]], \hat{\pi})$ is a Lie superalgebra. Similarly $\mu$ extends to $\tilde{\mu}:\h[[t]]\otimes \h[[t]]\to \h[[t]]$ and  $(\h[[t]], \hat{\mu})$ is a Lie superalgebra.
        \item $\rho $ induces a Lie superalgebra homomorphism $\tilde{\rho}:\g\to Der(\h[[t]],\h[[t]])$ given by $\tilde{\rho}(x)(\sum_{i=1}^\infty y_jt^j)=\sum_{i=1}^\infty\rho(x) (y_j)t^j)$.
        \item For simplicity we denote  $\tilde{\pi}$, $\hat{\mu}$ and $\tilde{\rho}$ by $\pi$ and $\mu$, $\rho$ respectively and recognize  them from context.
    \end{enumerate}
\end{rem}
\begin{defn}\label{RA225}
A formal one parameter deformation of $D$ is a linear map $D_t: \g[[t]]\to \h[[t]]$ such that following condition is  satisfied:
\begin{equation}\label{RA286}
   D_t[x,y]=\rho(x)D_t(y)- (-1)^{\alpha\beta}\rho(y)D_t(x)+[D_t(x), D_t(y)]
\end{equation}
for every   $x\in g_\alpha,$ $ y\in g_{\beta}$. If $D_1\ne 0,$ then it is called infinitesimal of the deformation $D_t.$

\end{defn}
Equation \ref{RA286} is equivalent to following:
\begin{equation}\label{RA227}
   D_n[x,y]=\rho(x)D_n(y)- (-1)^{\alpha\beta}\rho(y)D_n(x)+\sum_{i+j=n}[D_i(x), D_j(y)], \forall n\ge 0.
\end{equation}
In particular, for $n=1$ we have 
\begin{align}\label{RA228}
    D_1[x,y] &=\rho(x)D_1(y)- (-1)^{\alpha\beta}\rho(y)D_1(x)+[D_1(x), D(y)]+[D(x), D_1(y)]\nonumber\\
    &= D_1[x,y]+\partial_{\pi+\rho}D_1(x,y)+ [[D,D_1]](x,y) \nonumber\\
    &=D_1[x,y]+d_DD_1(x,y)
\end{align}
 Using expression \ref{RA228}, we conclude that  $d_DD_1=0,$ that is, $D_1$ is a cocycle for the cochain complex $(C^*(D),d_D)$.  Hence we have following lemma:
\begin{lem}
    Infinitesimal of a deformation is a $1$-cocycle.
\end{lem}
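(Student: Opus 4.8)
The plan is to show that the infinitesimal $D_1$ of a formal one parameter deformation $D_t$ is a $1$-cocycle for the cochain complex $(C^*(D), d_D)$, i.e.\ that $d_D D_1 = 0$. The whole argument rests on extracting the coefficient of $t$ from the defining deformation equation \ref{RA286} and recognizing the resulting expression as the coboundary operator $d_D$ applied to $D_1$, using the formula $d_D f = \partial_{\pi+\rho} f + \lb D, f \rb$.

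First I would write out the deformation condition \ref{RA286} as the family of equations \ref{RA227} indexed by $n \ge 0$, obtained by substituting $D_t = \sum_{i\ge 0} D_i t^i$ into both sides and comparing coefficients of $t^n$; here the bracket term $[D_t(x), D_t(y)]$ produces the convolution sum $\sum_{i+j=n}[D_i(x), D_j(y)]$ while the linear terms $\rho(x) D_t(y)$ and $\rho(y) D_t(x)$ contribute only $\rho(x) D_n(y)$ and $\rho(y) D_n(x)$ at order $t^n$ since $\rho$ carries no $t$. The case $n = 0$ recovers exactly the statement that $D = D_0$ is a crossed homomorphism, which holds by hypothesis.

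Next I would specialize to $n = 1$, which gives
\[
D_1[x,y] = \rho(x) D_1(y) - (-1)^{\alpha\beta}\rho(y) D_1(x) + [D_1(x), D(y)] + [D(x), D_1(y)],
\]
as recorded in \ref{RA228}. The key step is to identify the right-hand side (after subtracting $D_1[x,y]$ from both sides, or equivalently reading off that the net contribution must vanish) with $d_D D_1$ evaluated on $(x,y)$. Concretely, the terms $\rho(x) D_1(y) - (-1)^{\alpha\beta}\rho(y) D_1(x) - D_1[x,y]$ assemble into $\partial_{\pi+\rho} D_1 (x,y) = [\pi+\rho, D_1](x,y)$, while the two bracket terms $[D_1(x), D(y)] + [D(x), D_1(y)]$ assemble into $\lb D, D_1 \rb (x,y)$, using the explicit formula for $\lb .,. \rb$ established in Section \ref{RAS5}. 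Summing these two pieces yields precisely $d_D D_1 (x,y)$ by the definition $d_D f = \partial_{\pi+\rho} f + \lb D, f \rb$, so the $n=1$ equation is equivalent to $d_D D_1 = 0$.

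The main obstacle, and the only genuinely delicate point, is matching the sign conventions and the combinatorial factors so that the linear terms really do reproduce $\partial_{\pi+\rho} D_1$ and the quadratic terms really do reproduce $\lb D, D_1 \rb$, rather than some rescaled version. I would verify this by comparing against the Maurer-Cartan computation in the proof of Theorem \ref{RA222}, where the analogous expansion of $\partial_{\pi+\rho} D + \frac{1}{2}\lb D, D \rb$ on $\wedge^2 \g$ was carried out explicitly; the same bookkeeping of the $(-1)^{\alpha\beta}$ factors and the symmetrization inherent in $\lb D, D_1 \rb = [D_1(x),D(y)] + [D(x),D_1(y)]$ applies here. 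Once this identification is confirmed, the conclusion $d_D D_1 = 0$ is immediate, establishing that $D_1$ is a $1$-cocycle in $(C^*(D), d_D)$ and completing the proof.
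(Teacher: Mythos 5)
Your proposal is correct and follows essentially the same route as the paper: expand $D_t$ in the deformation equation, compare coefficients of $t^1$ to obtain equation \ref{RA228}, and identify the linear terms with $\partial_{\pi+\rho}D_1$ and the quadratic terms with $\lb D, D_1\rb$ so that the $n=1$ relation reads $D_1[x,y]=D_1[x,y]+d_DD_1(x,y)$, forcing $d_DD_1=0$. The sign-matching you flag as the delicate point is handled in the paper exactly as you suggest, by reading off the explicit formula for $\partial_{\pi+\rho}D+\tfrac{1}{2}\lb D,D\rb$ on $\wedge^2\g$ from the proof of Theorem \ref{RA222}.
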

 $\pi$ and $\mu$ extend naturally to make $\g[[t]]/(t^2) $ and  $\h[[t]]/(t^2)$ Lie superalgebras, respectively.  
\begin{defn}\label{RA370} 
A Linear (or degree one or infinitesimal) one parameter deformation of $D$ is a linear map $D_t:\g[[t]]/(t^2) \to \h[[t]]/(t^2)$ such that following condition is  satisfied:
\begin{equation}\label{RA226}
   D_t[x,y]=\rho(x)D_t(y)- (-1)^{\alpha\beta}\rho(y)D_t(x)+[D_t(x), D_t(y)],
\end{equation}
for every   $x\in g_\alpha,$ $ y\in g_{\beta}$, where $D_t=D+D_1t,$  and $D_1:\g\to \h$ is a linear map of degree $0$. 
\end{defn}
 From Equation \ref{RA227}, we conclude following theorem:

 \begin{thm}
     $D+D_1t:\g[[t]]/(t^2)\to \h[[t]]/(t^2)$  is a linear one parameter deformation if and only if $D_1$ is a $1$-cocycle in $(C^*(D),d_D)$.
  \end{thm}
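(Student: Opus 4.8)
The plan is to exploit the fact that, modulo $t^2$, the defining equation \ref{RA226} of a linear deformation collapses to just its constant and linear coefficients in $t$, and then to identify these two coefficients with the crossed-homomorphism condition for $D$ and the cocycle condition for $D_1$, respectively.

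First I would substitute $D_t = D + D_1 t$ into \ref{RA226} and expand both sides in powers of $t$. The only nonlinear contribution comes from the bracket term, where $[D_t(x), D_t(y)] = [D(x), D(y)] + \big([D_1(x), D(y)] + [D(x), D_1(y)]\big)t + [D_1(x), D_1(y)]t^2$; since we are working in $\g[[t]]/(t^2)$ and $\h[[t]]/(t^2)$, the $t^2$ term vanishes and no higher-order terms survive. Hence \ref{RA226} is equivalent to the pair of equations obtained by equating the coefficients of $t^0$ and $t^1$, which is precisely \ref{RA227} for $n = 0$ and $n = 1$.

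Next I would observe that the $n = 0$ equation is exactly the statement that $D$ is a crossed homomorphism with respect to $\rho$, which holds by hypothesis and therefore imposes no further constraint. It remains to analyze the $n = 1$ equation. Here I would invoke the computation already carried out in \ref{RA228}, which rewrites the $n = 1$ instance of \ref{RA227} as $D_1[x,y] = D_1[x,y] + \partial_{\pi+\rho}D_1(x,y) + \lb D, D_1 \rb(x,y)$, i.e. as $\big(\partial_{\pi+\rho}D_1 + \lb D, D_1 \rb\big)(x,y) = 0$ for all homogeneous $x, y \in \g$. By the definition $d_D = \partial_{\pi+\rho} + \lb D, -\, \rb$, this is exactly $d_D D_1 = 0$, that is, $D_1$ is a $1$-cocycle of $(C^*(D), d_D)$.

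Putting the two coefficients together yields the equivalence in both directions: $D + D_1 t$ satisfies \ref{RA226} modulo $t^2$ if and only if both the $n=0$ and $n=1$ equations hold, which in turn holds if and only if $D$ is a crossed homomorphism (true by assumption) and $d_D D_1 = 0$. I do not anticipate a genuine obstacle; the only point requiring care is the bookkeeping of the $\mathbb{Z}_2$-degree signs when expanding the graded bracket and confirming that the mixed term $[D_1(x), D(y)] + [D(x), D_1(y)]$ coincides with the sum $\sum_{i+j=1}[D_i(x), D_j(y)]$ of \ref{RA227}, which is immediate once one recalls that every $D_i$ has degree $0$.
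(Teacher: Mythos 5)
Your proposal is correct and follows essentially the same route as the paper, which simply reads off the theorem from the coefficient-wise equations \ref{RA227} together with the identification in \ref{RA228} of the $n=1$ equation with $d_D D_1 = 0$. Your expansion of $[D_t(x),D_t(y)]$ modulo $t^2$ and the observation that the $n=0$ coefficient is just the crossed-homomorphism condition on $D$ make explicit exactly what the paper leaves implicit.
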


\noindent\textbf{Acknowledgements:} Second author is supported by DST-INSPIRE fellowship by government of India.

\bibliographystyle{alpha}
\bibliography{raj.bib}

\end{document}